\documentclass{mcom-l}
\copyrightinfo{}{}
\makeatletter
\AtBeginDocument{\def\@serieslogo{\set@logo{\publname}}}
\makeatother

\usepackage{amsmath,amssymb,mathtools}
\usepackage{array}
\usepackage{booktabs}
\usepackage{graphicx}
\usepackage{multirow}
\usepackage{placeins}
\usepackage{url}
\usepackage[hidelinks]{hyperref}
\numberwithin{equation}{section}

\newtheorem{theorem}{Theorem}[section]
\newtheorem{lemma}[theorem]{Lemma}
\newtheorem{corollary}[theorem]{Corollary}
\newtheorem{proposition}[theorem]{Proposition}
\theoremstyle{definition}

\newtheorem{example}[theorem]{Example}
\theoremstyle{remark}
\newtheorem{remark}[theorem]{Remark}

\newcommand{\Ftwo}{\mathbb{F}_2}
\newcommand{\poly}{\Ftwo[x]}
\newcommand{\Dmin}{\Delta_{\min}}
\newcommand{\Wfb}{W_{\mathrm{fb}}}

\title[Reduction Modulo Binary Polynomials]{Reduction Modulo Binary
Polynomials with Logarithmic Feedback Depth}

\author{Junyu Zhou}
\address{Sichuan University}
\email{helloworld@junyu33.me}
\author{Kaiyi Zhang}
\address{Tsinghua University}
\email{kaiyizhang@tsinghua.edu.cn}
\subjclass[2020]{Primary 11T06; Secondary 12-08, 68W10, 68W40}
\keywords{binary polynomials, modular reduction, finite fields,
sparse polynomials, Frobenius map, parallel algorithms}

\begin{document}

\begin{abstract}
Polynomial modular reduction is central to binary finite-field arithmetic and
repeated Frobenius powering.  Top-down shift/XOR folding can use few operations
when the nonleading support is sparse, but a tap near the leading term creates
a long feedback chain.  We
formulate this recurrence as inversion of a nilpotent shift operator and factor
its inverse by characteristic-two Frobenius powers.  The resulting
\emph{Frobenius-factorized reduction} (FFR) applies to every monic binary
modulus without materializing a reciprocal or dense reduction matrix, and its
shifts can be generated online without a persistent modulus-specific schedule.
For degree $m$ and nonempty nonleading support of size $s$, with nearest-tap
distance $\Dmin$, FFR uses exactly $\lceil\log_2(m/\Dmin)\rceil$ sequential
feedback stages and has scheduled work $O(ms(1+\log(m/s)))$.  A portable-C
evaluation on 1,096 supports through degree
$131072$ identifies distinct FFR, L\'opez--Dahab, and gf2x-backed Barrett
regions.  On four certified irreducible moduli, FFR makes complete Rabin
irreducibility testing $1.35$--$8.04$ times faster than NTL and
$1.60$--$6.81$ times faster than the matched Barrett implementation.
\end{abstract}

\maketitle

\section{Introduction}
\label{sec:introduction}

Arithmetic in a polynomial-basis representation of a binary extension field
reduces products and squares modulo a monic polynomial.  The same operation
appears in polynomial factorization, irreducibility testing, and searches for
sparse irreducible or primitive polynomials.  In these settings the modulus is
public and often fixed for many reductions, but the useful degree, support,
and setup-amortization regimes vary substantially.

In the regime studied here, three modulus parameters provide the principal
coordinates for reduction time:
\begin{enumerate}
\item the degree $m$, which fixes the operand length and hence the amount of
      word-level data processed;
\item the Hamming weight $h$, including the leading term, which determines
      the number of nonleading taps processed by tapwise shift/XOR methods; and
\item the nearest-tap distance $\Dmin$, namely the gap from degree $m$ to the
      highest positive nonleading tap, when one exists, which controls how far
      feedback can propagate before leaving the degree-$m$ state; without
      such a tap there is no feedback chain.
\end{enumerate}
Observed runtime also depends on complete tap placement, word alignment,
setup, and the multiplication backend.  The three parameters above separate
the main size, sparsity, and feedback effects and therefore define the axes
and slices used in our evaluation.

When the nonleading part is sparse, top-down folding uses little setup and can
be extremely effective.  Its work and latency, however, are controlled by
different geometric features.  Hamming weight controls how many shifts and
XORs one fold introduces, whereas the distance from the leading term to the
nearest internal tap controls how far feedback can propagate.  Equal-degree,
equal-weight moduli may therefore induce very different dependency chains.

Barrett and Montgomery reduction avoid this particular serial recurrence, but
introduce multiplication kernels, modulus-dependent constants, and different
setup costs.  Reciprocal methods and dense linear maps expose additional
parallelism, but do not automatically preserve the original tap support.  We
ask whether the tap-induced feedback chain can be shortened without first
materializing a dense reciprocal or reduction matrix.

We answer this question with \emph{Frobenius-factorized reduction} (FFR).
It requires no materialized reciprocal, dense reduction matrix, or persistent
modulus-specific shift schedule: the doubled shifts may be derived online
from the original taps or retained in a lightweight reusable plan.  Online
execution uses an $m$-bit state and tap-sized descriptor scratch; ``no
persistent schedule'' refers specifically to precomputed reduction data.  For
an input of degree below twice the modulus degree, reduction gives a
finite forced recurrence.  We write it as $(I+U)Y=H$ for a truncated shift
operator $U$.  When $U\ne0$, characteristic two gives
\begin{equation}
  (I+U)^{-1}=\prod_{k=0}^{r-1}(I+U^{2^k}),
  \qquad
  r=\left\lceil\log_2\frac{m}{\Dmin}\right\rceil,
  \label{eq:intro-factorization}
\end{equation}
and $U^{2^r}=0$; when $U=0$, no feedback stage is needed.  Each Frobenius
power contains only the active original taps, with their distances doubled.

The finite geometric inverse underlying
\eqref{eq:intro-factorization} is classical
\cite{BiniPan1985,Kalorkoti1993}, but the identity alone does not give a
support-nondensifying executable reducer: expanding it exposes mixed feedback
paths and supplies neither a support-sensitive execution schedule nor
packed-word work bounds.
The algorithmic point here is that the Frobenius factors preserve the tap
structure without combinatorial densification inside a stage; mixed paths
arise only through composition across stages.  This turns the inverse into an
executable reducer for every monic binary modulus whose feedback depth, work,
schedule size, and packed-word execution can be analyzed directly from the
support geometry.

This construction generalizes our earlier Suwako reducer for trinomials
$x^m+x^t+1$~\cite{ZhouEtAl2025Suwako}.  That work established the
logarithmic-depth, precomputation-free trinomial case through two specialized
doubling folds.  With multiple taps, one cannot apply the trinomial fold to
each tap independently: all shifts in one factor must read the same stage
input, while their compositions create mixed paths between factors.  The
present contribution resolves this multi-tap execution and gives the resulting
all-tap geometry, packed-word analysis, planned and online implementations,
and comparative evaluation.

Our contributions are threefold.
\begin{enumerate}
\item We derive a nilpotent-feedback formulation whose characteristic-two
      factorization introduces no within-factor support densification and
      gives a correct FFR algorithm for any monic binary modulus.
\item We give exact tap-geometry analyses of coefficient work, packed-word
      work, feedback depth, schedule size, and workspace, together with
      planned and schedule-free online realizations.
\item We provide a reproducible portable-C evaluation that maps winning and
      losing regions against serial folding, L\'opez--Dahab, and gf2x-backed
      Barrett reduction, and test the reduction advantage inside a complete
      Rabin irreducibility test with an optimized NTL baseline.
\end{enumerate}

Correctness holds for every monic binary modulus, while sparsity governs the
performance regime.  Our depth results use the synchronous feedback-stage and
bounded-fan-in models defined below; complete-workload effects are evaluated
separately through Rabin irreducibility testing.

Section~\ref{sec:preliminaries} fixes the problem and cost models.
Sections~\ref{sec:operators}--\ref{sec:complexity} develop
the method and analysis.  Section~\ref{sec:related} states the prior-art
boundary.  Section~\ref{sec:evaluation} contains the implementation and
application evidence, and Section~\ref{sec:conclusion} concludes.

\section{Preliminaries and Cost Models}
\label{sec:preliminaries}

Let
\begin{equation}
  g(x)=x^m+q(x)=x^m+\bigoplus_{t\in T}x^t\in\poly,
  \qquad T\subseteq\{0,\ldots,m-1\},
  \label{eq:modulus}
\end{equation}
be monic, and set $s=|T|$ and $h=1+s$.  For $t\in T$, define
$\Delta_t=m-t$.  If $T\ne\varnothing$, let
\begin{equation}
  \Dmin=\min\{\Delta_t:t\in T\}.
\end{equation}
When $T\subseteq\{0\}$, the feedback operator below is zero and we set
$r=0$; otherwise $r$ is defined by~\eqref{eq:r}.

Identify degree-below-$m$ polynomials with vectors in $\Ftwo^m$.  Let $N$ be
the truncated right shift
\begin{equation}
  (N^d y)_i=\begin{cases}y_{i+d},&0\le i<m-d,\\0,&m-d\le i<m,
  \end{cases}
  \qquad N^m=0.
\end{equation}

For $A\in\poly$ with $\deg A<2m$, write
\begin{equation}
  A=L+x^mH,
  \qquad \deg L,\deg H<m.
  \label{eq:low-high}
\end{equation}
For degree-below-$m$ input $Y$, define linear maps $V$ and $U$ by
\begin{align}
  qY&=V(Y)+x^mU(Y),\nonumber\\
  V(Y)&=\sum_{t\in T}(x^tY\bmod x^m),
  &U&=\sum_{t\in T}N^{\Delta_t}.
  \label{eq:UV}
\end{align}
Here and below an empty sum denotes the zero map.  The term $t=0$, when
present, contributes the identity to $V$ and the zero shift $N^m$ to $U$.
Monicity is sufficient for a unique remainder; irreducibility is required only
when the quotient is intended to be a field.

We distinguish scheduled coefficient work, machine-word work, feedback depth,
bounded-fan-in gate depth, setup, and temporary space.  Scheduled coefficient
work counts shifted coefficient contributions, while machine-word work counts
the packed source-level data path. Machine latency and throughput are measured
under a fixed platform and timed boundary.
For the packed implementation, coefficient $i$ is bit $i\bmod W$ of word
$\lfloor i/W\rfloor$ in a little-endian array of
$n=\lceil m/W\rceil$ words.  An input representing $\deg A<2m$ has canonical
zero padding above coefficient $2m-1$; right-shift extraction preserves this
invariant, and output padding above coefficient $m-1$ is masked explicitly.
Caller-owned inputs and outputs are excluded from temporary-space counts.
Section~\ref{sec:evaluation} separately fixes
the steady-state reduction boundary, reusable-plan setup, storage, and any
amortization over a stated reuse count.

\section{Feedback Operators}
\label{sec:operators}

\begin{lemma}[Low/high decomposition]
\label{lem:decomposition}
For~\eqref{eq:low-high} and~\eqref{eq:UV},
\begin{equation}
  A-gY=L+V(Y)+x^m\bigl(H+Y+U(Y)\bigr).
\end{equation}
Consequently, if $H=(I+U)Y$, then $A\bmod g=L+V(Y)$.
\end{lemma}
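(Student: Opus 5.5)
The plan is a direct expansion in $\poly$ using characteristic two, so subtraction equals addition. First I will expand $gY=x^mY+qY$. By the defining relation~\eqref{eq:UV}, $qY=V(Y)+x^mU(Y)$. Substituting this together with $A=L+x^mH$ from~\eqref{eq:low-high} gives
\begin{equation*}
A-gY=L+x^mH+x^mY+V(Y)+x^mU(Y)=L+V(Y)+x^m\bigl(H+Y+U(Y)\bigr).
\end{equation*}
This is the claimed identity.

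The one step that needs an honest check is that~\eqref{eq:UV} really is an identity, i.e.\ that $V$ and $U$ as defined capture the low and high halves of $qY$. For each $t\in T$ and $\deg Y<m$, the product $x^tY$ has degree below $m+t<2m$. Its low part is $x^tY\bmod x^m$. Its high part, divided by $x^m$, is the polynomial with coefficients $y_{i+m-t}$ for $0\le i<t$. This equals $(N^{\Delta_t}y)_i$, which vanishes for $i\ge m-\Delta_t=t$. The case $t=0$ gives low part $Y$ (the identity) and high part $0=N^m$. Summing over $T$ by linearity yields~\eqref{eq:UV}. I expect this bookkeeping to be the main obstacle, mild as it is.

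For the consequence, suppose $H=(I+U)Y$, that is $H=Y+U(Y)$. Then $H+Y+U(Y)=2H=0$ in characteristic two, so $A-gY=L+V(Y)$. Hence $A\equiv L+V(Y)\pmod g$. Moreover $\deg L<m$, and $\deg V(Y)<m$ since each summand is reduced mod $x^m$. So $L+V(Y)$ has degree below $m=\deg g$. Because $g$ is monic, division with remainder is unique, and this forces $A\bmod g=L+V(Y)$.
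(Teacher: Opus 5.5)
Your proposal is correct and follows essentially the same route as the paper: expand $gY=x^mY+qY$, substitute \eqref{eq:UV} and \eqref{eq:low-high}, observe that the high factor vanishes when $H=(I+U)Y$, and conclude by degree and uniqueness of the remainder modulo the monic $g$. Your extra check that the explicit formulas for $V$ and $U$ agree with the defining relation $qY=V(Y)+x^mU(Y)$ is a sound addition; the paper simply takes that relation as part of the definition.
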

\begin{proof}
Using~\eqref{eq:low-high} and~\eqref{eq:UV}, and using that subtraction and
addition coincide over $\Ftwo$, we obtain
\begin{align*}
  A-gY
  &=L+x^mH+x^mY+qY\\
  &=L+V(Y)+x^m\bigl(H+Y+U(Y)\bigr).
\end{align*}
If $H=(I+U)Y$, the entire high-part factor in the last display vanishes.
Hence $A-(L+V(Y))=gY$, while $\deg(L+V(Y))<m$.  The uniqueness of the
remainder on division by the monic polynomial $g$ proves the claim.
\end{proof}

\begin{lemma}[Support-nondensifying Frobenius powers]
\label{lem:frobenius}
For every $k\ge0$,
\begin{equation}
  U^{2^k}=\sum_{t\in T}N^{2^k\Delta_t},
  \label{eq:frobenius}
\end{equation}
where shifts by at least $m$ are zero.
\end{lemma}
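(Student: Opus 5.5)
The plan is induction on $k$, using that all the shifts $N^{\Delta_t}$ are powers of the single operator $N$ and hence commute, and that over $\Ftwo$ squaring is additive on a commutative algebra. The base case $k=0$ is the definition $U=\sum_{t\in T}N^{\Delta_t}$ from~\eqref{eq:UV}. For $t=0$ the summand is $N^m=0$, which agrees with the convention that shifts by at least $m$ vanish.

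First I record the semigroup law for the truncated shift. For $a,b\ge0$,
\[
N^aN^b=N^{a+b},
\]
where a shift by at least $m$ is zero. I would check this coordinatewise from the defining formula: $(N^a(N^b y))_i=(N^b y)_{i+a}$ when $i<m-a$. This equals $y_{i+a+b}$ when $i+a<m-b$, and equals $0$ otherwise, which is exactly $(N^{a+b}y)_i$. In particular $N^d=0$ for every $d\ge m$, so any expression $N^{2^k\Delta_t}$ with $2^k\Delta_t\ge m$ is zero. This justifies the phrase ``shifts by at least $m$ are zero'' in the statement.

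The key step is the Frobenius property. All the $N^{\Delta_t}$ lie in the commutative subalgebra $\Ftwo[N]$ of $\Ftwo$-linear endomorphisms of $\Ftwo^m$, so the binomial expansion is valid there. Hence
\[
\Bigl(\sum_{t\in T}P_t\Bigr)^2=\sum_{t\in T}P_t^2+\sum_{t<t'}2P_tP_{t'}=\sum_{t\in T}P_t^2
\]
in characteristic two. Applying this with $P_t=N^{2^k\Delta_t}$ to the inductive hypothesis gives
\[
U^{2^{k+1}}=\bigl(U^{2^k}\bigr)^2=\sum_{t\in T}N^{2^k\Delta_t}N^{2^k\Delta_t}=\sum_{t\in T}N^{2^{k+1}\Delta_t},
\]
by the semigroup law. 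When $T=\varnothing$ both sides are the zero map, so that case is automatic.

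The only point needing care is the truncation. Squaring inside $\Ftwo[N]$ is legitimate because $N$ is an honest linear operator, and truncation is already built into the identity $N^aN^b=N^{a+b}$, including the case where the sum reaches or exceeds $m$. I expect no real obstacle beyond verifying that coordinatewise identity.
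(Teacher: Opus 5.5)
Your proof is correct and follows essentially the same route as the paper: the truncated semigroup law $N^aN^b=N^{a+b}$ gives commutation, characteristic two makes squaring additive on commuting sums, and iterating this (your induction on $k$) yields the identity, with $N^d=0$ for $d\ge m$. The paper also notes that each surviving $N^d$ with $d<m$ occupies only the $d$th superdiagonal, so the distinct surviving shifts (distinct because the $\Delta_t$ are distinct) cannot cancel. That remark is not needed for the displayed identity, but it justifies the ``support-nondensifying'' name and is used for the minimality of $r$ in Theorem~\ref{thm:inverse}.
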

\begin{proof}
The truncated shifts satisfy $N^aN^b=N^{a+b}$ and therefore commute.  In the
endomorphism algebra of $\Ftwo^m$, commuting elements satisfy
$(X_1+\cdots+X_s)^2=X_1^2+\cdots+X_s^2$.  Starting from
$U=\sum_{t\in T}N^{\Delta_t}$ and iterating this identity $k$ times gives
\[
  U^{2^k}=\sum_{t\in T}(N^{\Delta_t})^{2^k}
           =\sum_{t\in T}N^{2^k\Delta_t}.
\]
Finally, $N^d=0$ for $d\ge m$.  For $d<m$, the matrix of $N^d$ occupies the
$d$th superdiagonal.  Distinct surviving shifts therefore have disjoint
matrix support and cannot cancel.
\end{proof}

\begin{theorem}[Frobenius-factorized feedback inverse]
\label{thm:inverse}
If $U=0$, set $r=0$; then $(I+U)^{-1}=I$, with the empty product understood
as $I$.  Otherwise, for
\begin{equation}
  r=\left\lceil\log_2\frac{m}{\Dmin}\right\rceil,
  \label{eq:r}
\end{equation}
we have $U^{2^r}=0$ and
\begin{equation}
  (I+U)^{-1}=\prod_{k=0}^{r-1}(I+U^{2^k}).
  \label{eq:inverse}
\end{equation}
Moreover, $r$ is the least nonnegative integer for which $U^{2^r}=0$.
\end{theorem}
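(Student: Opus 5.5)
The plan has three parts: show that $U^{2^k}$ is zero exactly when $2^k\Dmin\ge m$, then get the inverse from a telescoping product, then handle the trivial case $U=0$.

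\textbf{Step 1: when $U^{2^k}$ vanishes.} By Lemma~\ref{lem:frobenius}, $U^{2^k}=\sum_{t\in T}N^{2^k\Delta_t}$. We need the exact criterion for this to be the zero map. In that sum, every term with $2^k\Delta_t\ge m$ is zero. The lemma also shows that the surviving shifts occupy disjoint superdiagonals, so they cannot cancel. Hence $U^{2^k}=0$ if and only if $2^k\Delta_t\ge m$ for every $t\in T$ with $\Delta_t<m$.

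When $U\neq 0$, some tap satisfies $\Delta_t<m$, that is, $t\ge1$. The minimum over $T$ is then attained at such a tap, because $\Dmin\le\Delta_t<m$. The $t=0$ tap has $\Delta_0=m$ and is irrelevant. The criterion therefore becomes $2^k\Dmin\ge m$, i.e.\ $k\ge\log_2(m/\Dmin)$. The least such integer is $r=\lceil\log_2(m/\Dmin)\rceil$. This gives both $U^{2^r}=0$ and the minimality claim, and $r\ge1$ since $m/\Dmin>1$.

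\textbf{Step 2: the inverse.} We use the telescoping identity over $\Ftwo$:
\[
(I+U)\prod_{k=0}^{r-1}(I+U^{2^k})=I+U^{2^r}.
\]
This follows by induction on $r$ from $(I+X)(I+X)=I+X^2$ in characteristic two, applied with $X=U^{2^{r-1}}$. All factors are polynomials in $U$, so they commute and the order of the product does not matter. Substituting $U^{2^r}=0$ gives a right inverse, and commutativity makes it a two-sided inverse. In finite dimension a one-sided inverse is two-sided anyway.

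\textbf{Step 3: the case $U=0$.} Here $(I+U)^{-1}=I$, which agrees with the empty product, and $r=0$ is trivially least since $U^{2^0}=U=0$. This is the convention the paper fixes when $T\subseteq\{0\}$.

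The only delicate point is the minimality argument in Step 1. It depends on the no-cancellation part of Lemma~\ref{lem:frobenius} and on noting that $\Dmin$ is attained by a tap that actually contributes, rather than only by the zero-shift term from $t=0$.
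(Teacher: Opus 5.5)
Your proposal is correct and follows essentially the same route as the paper: you get $U^{2^r}=0$ and minimality from Lemma~\ref{lem:frobenius}, using its no-cancellation clause and the fact that $\Dmin$ is attained by a positive tap, and then you apply the characteristic-two telescoping identity with commuting factors. The only cosmetic difference is that you prove $U^{2^k}=0 \iff 2^k\Dmin\ge m$ for every $k$, whereas the paper checks only $k=r$ and $k=r-1$.
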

\begin{proof}
Suppose $U\ne0$.  Then $T$ contains a positive tap, $\Dmin<m$, and
$r\ge1$.  By the definition of $r$, every $t\in T$ satisfies
$2^r\Delta_t\ge2^r\Dmin\ge m$.  Lemma~\ref{lem:frobenius} therefore gives
$U^{2^r}=0$.  On the other hand,
$2^{r-1}\Dmin<m$, so the unique tap at distance $\Dmin$ contributes the
nonzero shift $N^{2^{r-1}\Dmin}$ to $U^{2^{r-1}}$.  Thus $r$ is minimal.

For any $j\ge0$, repeated use of $(I+X)^2=I+X^2$ gives the telescoping
identity
\begin{equation*}
  (I+U)\prod_{k=0}^{j-1}(I+U^{2^k})=I+U^{2^j}.
\end{equation*}
Taking $j=r$ makes the right-hand side $I$.  All factors are polynomials in
$U$ and hence commute, so the product is both a left and a right inverse of
$I+U$.  The case $U=0$ is immediate.
\end{proof}

This is a factored truncated reciprocal.  Formal-series inversion itself is
classical~\cite{Kalorkoti1993}; the proposed contribution is the
support-nondensifying fixed-length realization and its complete tap-geometry
analysis.

\section{Frobenius-Factorized Reduction}
\label{sec:algorithm}

FFR applies the Frobenius factors of the truncated feedback inverse without
materializing that inverse.  We distinguish a \emph{planned} realization,
which retains compact shift descriptors, from an \emph{online} realization,
which derives them from the original taps on every call.
Both implementation interfaces accept the complete nonleading support $T$ as
a duplicate-free list of exponents in strictly increasing order, and both
constructors validate this canonical representation.

For every $k\ge0$, define
\begin{equation}
  T_k=\{t\in T:2^k\Delta_t<m\}.
\end{equation}
Then $T_k=\varnothing$ for $k\ge r$.  At stage $k$, either realization uses
the shifts $2^k\Delta_t$ for $t\in T_k$.  A reusable plan stores these shifts
for $0\le k<r$ and the taps for final low assembly.  The online realization
instead enumerates $T_k$, doubles the active distances, and constructs the
same descriptors in tap order immediately before applying the stage; the
descriptors are overwritten at the next stage and are not reused between
reductions as a schedule.

\begin{quote}
\textbf{Algorithm 1 (Frobenius-factorized reduction).}
Split $A=L+x^mH$ and set $Y\leftarrow H$.  For $k=0,\ldots,r-1$, read one
immutable stage input $Y_{\rm old}$ and compute synchronously
\begin{equation*}
  Y_{\rm new}=Y_{\rm old}
  +\sum_{t\in T_k}N^{2^k\Delta_t}Y_{\rm old}.
\end{equation*}
Set $Y\leftarrow Y_{\rm new}$ and return $L+V(Y)$.
\end{quote}

An in-place tap loop that exposes updates within a stage computes a different
recurrence.

\begin{theorem}[Correctness]
\label{thm:correctness}
For every monic $g\in\poly$ of degree $m$ and every $A$ with $\deg A<2m$,
Algorithm~1 returns $A\bmod g$.
\end{theorem}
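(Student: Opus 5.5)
The plan is to reduce correctness to Lemma~\ref{lem:decomposition} and Theorem~\ref{thm:inverse}. Lemma~\ref{lem:decomposition} says it suffices to produce the unique $Y$ with $(I+U)Y=H$ and then return $L+V(Y)$. Algorithm~1 already ends by returning $L+V(Y)$. So the whole argument comes down to one claim: the final state equals $(I+U)^{-1}H$.

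First I would fix what each stage computes. Stage $k$ maps $Y_{\rm old}$ to $Y_{\rm old}+\sum_{t\in T_k}N^{2^k\Delta_t}Y_{\rm old}$. For $t\notin T_k$ we have $2^k\Delta_t\ge m$, so $N^{2^k\Delta_t}=0$. Hence the sum over $T_k$ equals the sum over all of $T$. By Lemma~\ref{lem:frobenius} this full sum is $U^{2^k}$, so stage $k$ applies exactly $I+U^{2^k}$ to $Y_{\rm old}$.

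Two details need care here.

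\begin{itemize}
\item \emph{The tap $t=0$.} It has $\Delta_0=m$, so it never lies in any $T_k$. It also contributes the zero operator to $U$, which matches.
\item \emph{Synchrony.} Every shift in a stage reads the same immutable $Y_{\rm old}$. This is exactly why the stage is the operator $I+U^{2^k}$ rather than the in-place recurrence mentioned after the algorithm.
\end{itemize}

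Composing the stages, the final state is $\prod_{k=0}^{r-1}(I+U^{2^k})H$. The factors are polynomials in $U$ and commute, so the stage order does not matter. I would then split into two cases.

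\begin{itemize}
\item If $U\neq0$, Theorem~\ref{thm:inverse} identifies the product with $(I+U)^{-1}$. Hence $(I+U)Y=H$.
\item If $U=0$, then $r=0$, there are no stages, and $Y=H=(I+U)^{-1}H$.
\end{itemize}

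One further point: the value of $r$ used by the algorithm must agree with \eqref{eq:r}. The convention for $T\subseteq\{0\}$ in Section~\ref{sec:preliminaries} settles this. In either case Lemma~\ref{lem:decomposition} gives output $A\bmod g$.

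I expect the main obstacle to be only the bookkeeping that identifies the restricted sum over $T_k$ with $U^{2^k}$, including the $t=0$ edge case and the meaning of the truncated shifts. The algebra itself is already supplied by the earlier results.
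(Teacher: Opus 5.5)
Your proposal is correct and follows essentially the same route as the paper: each synchronous stage is identified with $I+U^{2^k}$ via Lemma~\ref{lem:frobenius}, the composition gives $(I+U)^{-1}H$ by Theorem~\ref{thm:inverse}, and Lemma~\ref{lem:decomposition} finishes. Your extra bookkeeping makes explicit what the paper's short proof leaves implicit: extending the sum from $T_k$ to $T$ via truncation, the constant tap, and the $U=0$ convention for $r$.
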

\begin{proof}
At stage $k$, every summand reads the same value $Y_{\rm old}$, so the stage
maps its input to
\[
  \left(I+\sum_{t\in T_k}N^{2^k\Delta_t}\right)Y_{\rm old}
  =(I+U^{2^k})Y_{\rm old}
\]
by Lemma~\ref{lem:frobenius}.  After all stages,
\[
  Y=\prod_{k=0}^{r-1}(I+U^{2^k})H=(I+U)^{-1}H
\]
by Theorem~\ref{thm:inverse}; the order of the factors is immaterial because
they commute.  Hence $H=(I+U)Y$, and Lemma~\ref{lem:decomposition} shows that
the returned polynomial $L+V(Y)$ is $A\bmod g$.
\end{proof}

\begin{example}
Let
\[
  g=x^{16}+x^{15}+x^{11}+x^3+1.
\]
The nonconstant taps have distances $1$, $5$, and $13$; the constant tap has
distance $16$ and contributes no feedback.  Thus $r=4$, and Algorithm~1
applies the four maps
\begin{align*}
  I+N+N^5+N^{13},\qquad
  I+N^2+N^{10},\qquad
  I+N^4,\qquad
  I+N^8.
\end{align*}
The shifts $26$, $20$, and $52$ have disappeared by truncation.  Their mixed
feedback paths have not been discarded: they arise implicitly when these
four factors are composed.  For the resulting state $Y$, the algorithm
returns
\[
  L+Y+(x^3Y\bmod x^{16})+(x^{11}Y\bmod x^{16})
    +(x^{15}Y\bmod x^{16}).
\]
\end{example}

\begin{remark}[Boundary cases]
If $T=\varnothing$, then $g=x^m$, $U=V=0$, no feedback stage is executed,
and the remainder is $L$.  If $T=\{0\}$, then $g=x^m+1$, $U=0$ and $V=I$,
so the remainder is $L+H$.  Omitting the constant tap merely removes the
identity summand from $V$; it does not change the argument.  Dense and
reducible moduli satisfy the same correctness theorem, although density can
make the schedule unattractive.  Word misalignment affects implementation
masking and storage, not the algebraic algorithm.
\end{remark}

Appendix~\ref{app:coefficient-algebra} gives the coefficient-algebra and
odd-characteristic extensions.

\section{Work--Depth--Space Geometry}
\label{sec:complexity}

We first count \emph{scheduled coefficient contributions}: adding one shifted
source coefficient to one destination coefficient costs one unit.  Retaining
the unshifted stage input, copying a vector, testing a public bound, and
reading a schedule descriptor are not included in this measure.  Let
$h_k=|T_k|$, and define
\begin{equation}
  \ell_t=\#\{k\ge0:2^k\Delta_t<m\}
  =\left\lceil\log_2\frac{m}{\Delta_t}\right\rceil .
  \label{eq:tap-lifetime}
\end{equation}
The equality includes the constant tap: if $t=0$, then $\Delta_t=m$ and
$\ell_t=0$.

\begin{theorem}[Geometry-sensitive scheduled work]
\label{thm:work}
The number of stored feedback shifts and the feedback contribution work are,
respectively,
\begin{align}
  S_{\rm fb}
    &=\sum_{k=0}^{r-1}h_k
      =\sum_{t\in T}\ell_t,
      \label{eq:schedule-size}\\
  \Wfb
    &=\sum_{t\in T}\sum_{k\ge0}[m-2^k\Delta_t]_+,
      \qquad [z]_+=\max\{z,0\}.
  \label{eq:work}
\end{align}
The final low-part assembly has scheduled coefficient work
\begin{equation}
  W_V=\sum_{t\in T}(m-t)=\sum_{t\in T}\Delta_t.
  \label{eq:low-work}
\end{equation}
If $s\ge1$, then
\begin{align}
  S_{\rm fb}
  &\le s+\log_2\frac{m^s}{s!}
   \le s\left(1+\log_2\frac{em}{s}\right),                 \label{eq:schedule-bound}\\
  \Wfb
  &\le mS_{\rm fb}
   =O\!\left(ms\left(1+\log\frac{m}{s}\right)\right),      \label{eq:work-bound}\\
  \Wfb+W_V
  &=O\!\left(ms\left(1+\log\frac{m}{s}\right)\right).
  \label{eq:total-shift-work}
\end{align}
The inequality in~\eqref{eq:work-bound} is strict whenever $U\ne0$.
\end{theorem}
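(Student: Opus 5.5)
The plan is to prove the exact identities by double counting, and then to obtain the bounds from a concavity or Stirling estimate on $\sum_t\ell_t$.

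\emph{Exact counts.} For~\eqref{eq:schedule-size}, I will write $h_k=\sum_{t\in T}[2^k\Delta_t<m]$ and exchange the two sums. This gives $\sum_k h_k=\sum_t\ell_t$ by~\eqref{eq:tap-lifetime}. Restricting to $k<r$ loses nothing, since $T_k=\varnothing$ for $k\ge r$, as noted in Section~\ref{sec:algorithm}. For~\eqref{eq:work}, I observe that by the definition of $N^d$, the shift $N^d$ with $d<m$ writes exactly $m-d$ destination coefficients $y_{i+d}\to i$ for $0\le i<m-d$, and that shifts with $d\ge m$ are absent from stage $k$. Summing $m-2^k\Delta_t$ over the active pairs $(t,k)$ is therefore the same as summing $[m-2^k\Delta_t]_+$ over all $k\ge0$. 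For~\eqref{eq:low-work}, the map $Y\mapsto x^tY\bmod x^m$ contributes the coefficients $y_j\to j+t$ for $j<m-t$, which gives $m-t=\Delta_t$ units per tap.

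\emph{Schedule bound.} Using $\lceil z\rceil<z+1$, I get $\ell_t\le 1+\log_2(m/\Delta_t)$, hence
$S_{\rm fb}\le s+\log_2\bigl(m^s/\prod_t\Delta_t\bigr)$.
The distances $\Delta_t$ are distinct positive integers, because the taps are distinct. So $\prod_t\Delta_t\ge s!$, which is the first inequality of~\eqref{eq:schedule-bound}. The second follows from $s!\ge(s/e)^s$. The one real obstacle here is a boundary effect. The step $\lceil z\rceil\le z+1$ is loose but harmless. However, for the constant tap we have $\ell_0=0\le 1+\log_2 1$, so the bound holds trivially there as well; the only thing to confirm is that $\ell_t\ge0$ whenever $\Delta_t\le m$.

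\emph{Work bounds.} In~\eqref{eq:work-bound}, each nonzero summand $[m-2^k\Delta_t]_+$ is at most $m$, and the number of nonzero summands is exactly $\sum_t\ell_t=S_{\rm fb}$. This gives $\Wfb\le mS_{\rm fb}$. The inequality is strict because each active summand is at most $m-\Delta_t<m$. When $U\ne0$ there is at least one active pair, since some tap $t>0$ has $\ell_t\ge1$, so the inequality is strict in that case. The $O$-bound then follows from~\eqref{eq:schedule-bound}. For~\eqref{eq:total-shift-work}, I use $W_V\le ms$, which is absorbed into $O(ms(1+\log(m/s)))$ because $\log(m/s)\ge0$ when $s\le m$. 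Combining this with the previous bound finishes the argument.
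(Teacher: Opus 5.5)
Your proposal is correct and follows essentially the same route as the paper. You exchange sums to get $S_{\rm fb}=\sum_t\ell_t$, count $m-2^k\Delta_t$ and $\Delta_t$ positions per shift for $\Wfb$ and $W_V$, and combine $\lceil z\rceil\le z+1$ with distinct distances (your $\prod_t\Delta_t\ge s!$ is the paper's $d_i\ge i$), then apply $s!\ge(s/e)^s$ and the at-most-$m$ bound per shift, where your strictness argument via $m-2^k\Delta_t\le m-\Delta_t<m$ is simply a more explicit form of the paper's.
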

\begin{proof}
Tap $t$ appears precisely for the $\ell_t$ integers $k$ satisfying
$2^k\Delta_t<m$, proving~\eqref{eq:schedule-size}.  At such an occurrence,
the right shift affects exactly $m-2^k\Delta_t$ coefficient positions.
Summing these contributions proves~\eqref{eq:work}.  Similarly,
$x^tY\bmod x^m$ contains $m-t=\Delta_t$ scheduled source positions, which
proves~\eqref{eq:low-work}.

Order the distinct distances as
$1\le d_1<\cdots<d_s\le m$.  Since $d_i\ge i$ and
$\lceil z\rceil\le z+1$,
\[
  S_{\rm fb}
  \le\sum_{i=1}^s\left(1+\log_2\frac{m}{i}\right)
  =s+\log_2\frac{m^s}{s!}.
\]
The standard inequality $s!\ge(s/e)^s$ gives the second part
of~\eqref{eq:schedule-bound}.  Every one of the $S_{\rm fb}$ feedback shifts
affects at most $m$ positions, and affects strictly fewer than $m$ positions
when it is nonzero.  This proves~\eqref{eq:work-bound}, including the
non-strict boundary $\Wfb=S_{\rm fb}=0$ for a constant-only tap set.  Finally,
$W_V\le ms$, and~\eqref{eq:total-shift-work} follows.
\end{proof}

\begin{corollary}[Feedback depth and a gate-depth upper bound]
\label{cor:depth}
If $U\ne0$, Algorithm~1 has exactly
\[
  r=\left\lceil\log_2\frac{m}{\Dmin}\right\rceil
\]
sequential feedback stages; if $U=0$, it has none.  Thus, for fixed $m$ and
$\Dmin$, feedback depth is independent of $s$.  In a fan-in-two XOR model
with free fanout and unrestricted temporary wires, the complete reduction has
the upper bound
\begin{equation}
  D_\oplus
  \le\sum_{k=0}^{r-1}\left\lceil\log_2(1+h_k)\right\rceil
    +\left\lceil\log_2(1+s)\right\rceil .
  \label{eq:gate-depth-upper}
\end{equation}
\end{corollary}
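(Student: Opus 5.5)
The first claim follows directly from the structure of Algorithm~1. It executes one synchronous stage for each $k=0,\ldots,r-1$, and each stage reads only the output of the previous stage. So the number of sequential feedback stages equals $r$, and Theorem~\ref{thm:inverse} identifies this $r$ as $\lceil\log_2(m/\Dmin)\rceil$ when $U\ne0$ and as $0$ when $U=0$. I will note that ``exactly'' means no stage is vacuous. Minimality in Theorem~\ref{thm:inverse} shows $U^{2^{r-1}}\ne0$, so the tap at distance $\Dmin$ lies in $T_{r-1}$, and every earlier $T_k\supseteq T_{r-1}$ is nonempty. Since $r$ depends only on $m$ and $\Dmin$, independence from $s$ is immediate.

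For the gate-depth bound, I will build an explicit fan-in-two XOR circuit and bound its depth stage by stage, then add the assembly step. In stage $k$, output coefficient $i$ of $Y_{\rm new}$ equals $(Y_{\rm old})_i\oplus\bigoplus_{t\in T_k}(Y_{\rm old})_{i+2^k\Delta_t}$, where terms with index $\ge m$ are dropped. This is an XOR of at most $1+h_k$ bits, all of which are stage inputs. Free fanout allows each input bit to feed many such trees. A balanced binary XOR tree over $1+h_k$ leaves has depth $\lceil\log_2(1+h_k)\rceil$, and shorter sums only reduce the depth. Every output of stage $k$ therefore has depth at most the depth of the stage input plus $\lceil\log_2(1+h_k)\rceil$. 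Summing over $k$ gives depth at most $\sum_{k<r}\lceil\log_2(1+h_k)\rceil$ for every coefficient of $Y$, measured from the input wires $H$.

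The final output coefficient $i$ of $L+V(Y)$ is $L_i\oplus\bigoplus_{t\in T,\,t\le i}Y_{i-t}$, which is an XOR of at most $1+s$ bits. Here $L_i$ is an input at depth $0$, and each $Y_j$ has depth at most the accumulated stage bound. One balanced tree of depth $\lceil\log_2(1+s)\rceil$ placed on top of the deepest leaves yields~\eqref{eq:gate-depth-upper}. Splitting the input into $L$ and $H$, masking, and truncation are pure wiring with depth $0$, and the unrestricted temporary wires cover any intermediate storage.

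The only point I expect to need care is that the leaves of a tree may arrive at different depths. The bound $\max(\text{leaf depth})+\lceil\log_2(\text{leaf count})\rceil$ holds for a balanced tree whatever the leaf depths are, so no Huffman-style scheduling is needed for an upper bound.
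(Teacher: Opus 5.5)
Your proposal is correct and follows the paper's proof essentially step for step. Exactness comes from the minimality in Theorem~\ref{thm:inverse}. Each stage is a balanced XOR tree over at most $1+h_k$ leaves, the stages compose sequentially, and a final tree combines $L$ with at most $s$ shifted values. Your remarks that every $T_k$ with $k<r$ is nonempty and that a balanced tree's height bounds the depth regardless of unequal leaf depths are sound refinements that the paper leaves implicit.
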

\begin{proof}
The exact feedback-stage count is the minimality statement in
Theorem~\ref{thm:inverse}.  Within stage $k$, each output coefficient is the
sum of its old value and at most $h_k$ shifted values, so a balanced XOR tree
has depth at most $\lceil\log_2(1+h_k)\rceil$.  The stages are composed
sequentially.  Final assembly adds $L$ and at most $s$ shifted values, giving
the last term in~\eqref{eq:gate-depth-upper}.
\end{proof}

Equation~\eqref{eq:gate-depth-upper} is a fan-in-two Boolean-circuit upper
bound. Feedback depth treats a whole synchronous vector update as one stage,
whereas gate depth accounts for the XOR tree inside that stage.

\subsection{Packed-word geometry}

Let $W$ be the machine-word width.  Put $n=\lceil m/W\rceil$.  The following
counts describe destination-word contributions; they are not instruction
counts.  A feedback shift by $d<m$ has potentially nonzero coefficients in
$\lceil(m-d)/W\rceil$ destination words.  A low-assembly shift by $t$ touches
\[
  n-\left\lfloor\frac{t}{W}\right\rfloor
\]
destination words.  The latter is not always
$\lceil(m-t)/W\rceil$: a non-word-aligned interval may intersect an additional
word.

\begin{proposition}[Word-level loop geometry]
\label{prop:word-geometry}
For $U\ne0$, set
\[
  a_k=\left\lceil\frac{m-2^k\Dmin}{W}\right\rceil
  \qquad(0\le k<r).
\]
The useful feedback-word support and the shifted-word contributions traversed
by the scalar stage loops are, respectively,
\begin{align}
  \widehat B_{\rm fb}
    &=\sum_{t\in T}\sum_{\substack{k\ge0\\2^k\Delta_t<m}}
       \left\lceil\frac{m-2^k\Delta_t}{W}\right\rceil,\\
  B_{\rm fb}^{\rm loop}
    &=\sum_{k=0}^{r-1}\sum_{t\in T_k}
      \min\left\{a_k,\,
        n-\left\lfloor\frac{2^k\Delta_t}{W}\right\rfloor\right\},\\
  B_V
    &=\sum_{t\in T}
       \left(n-\left\lfloor\frac{t}{W}\right\rfloor\right).
\end{align}
They satisfy
\begin{equation}
  \widehat B_{\rm fb}
  \le B_{\rm fb}^{\rm loop}
  \le \widehat B_{\rm fb}+S_{\rm fb}.
  \label{eq:feedback-padding-overhead}
\end{equation}
Moreover,
\begin{equation}
  \sum_{k=0}^{r-1}a_k\le B_{\rm fb}^{\rm loop}.
  \label{eq:stage-span-dominated}
\end{equation}
An in-place scalar implementation makes one $n$-word high-part extraction,
traverses $a_k$ destination words at feedback stage $k$, and makes one
$n$-word final assembly pass, in addition to the
$B_{\rm fb}^{\rm loop}+B_V$ shifted contributions.
\end{proposition}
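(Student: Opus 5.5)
The plan is to compare the three sums term by term, reindexing the double sums so that each summand of $\widehat B_{\rm fb}$ matches one summand of $B_{\rm fb}^{\rm loop}$. Interchanging the order of summation, the pairs $(t,k)$ with $2^k\Delta_t<m$ are exactly the pairs with $0\le k<r$ and $t\in T_k$, since $T_k=\varnothing$ for $k\ge r$. Both sums therefore run over the same index set of size $S_{\rm fb}$, by~\eqref{eq:schedule-size}. It then suffices to show, for each such pair with $d=2^k\Delta_t<m$,
\[
  \left\lceil\frac{m-d}{W}\right\rceil
  \le \min\left\{a_k,\,n-\left\lfloor\frac{d}{W}\right\rfloor\right\}
  \le \left\lceil\frac{m-d}{W}\right\rceil+1 .
\]
Summing over the pairs then gives~\eqref{eq:feedback-padding-overhead}.

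For the lower bound I would check each argument of the minimum separately. Since $\Delta_t\ge\Dmin$, we have $m-d\le m-2^k\Dmin$, so $\lceil(m-d)/W\rceil\le a_k$. Next, with $d=qW+\rho$ and $0\le\rho<W$, the coefficient interval $[d,m)$ lies in words $q,\dots,n-1$. There are $n-q$ such words, so $\lceil(m-d)/W\rceil\le n-q$. For the upper bound it is enough to bound the second argument:
\[
  n-q=\lceil m/W\rceil-\lfloor d/W\rfloor\le\lceil (m-d)/W\rceil+1 .
\]
This follows from the elementary inequality $\lceil x\rceil-\lfloor y\rfloor\le\lceil x-y\rceil+1$, which I would verify by writing $x=y+(x-y)$ and using $\lceil a+b\rceil\le\lceil a\rceil+\lceil b\rceil$ together with $\lceil y\rceil\le\lfloor y\rfloor+1$. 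This elementary rounding inequality is the step I expect to need the most care.

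For~\eqref{eq:stage-span-dominated}, I would use that for each $k<r$ the nearest tap $t^*$ with $\Delta_{t^*}=\Dmin$ lies in $T_k$, because $2^k\Dmin\le 2^{r-1}\Dmin<m$. That is the minimality statement in Theorem~\ref{thm:inverse}. Its term in stage $k$ of $B_{\rm fb}^{\rm loop}$ is
\[
  \min\bigl\{a_k,\;n-\lfloor 2^k\Dmin/W\rfloor\bigr\}.
\]
By the lower-bound argument applied with $d=2^k\Dmin$, the second argument is at least $\lceil (m-2^k\Dmin)/W\rceil=a_k$, so this term equals $a_k$. All other terms are nonnegative, so summing over $k$ gives the claim.

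The final sentence is an accounting statement about the in-place scalar loop. I would justify it by describing the loop structure. The high part is extracted once, over $n$ words. At stage $k$ the destination range is the $a_k$ words that can receive any nonzero shifted bit; this range is dictated by the largest active shift interval, which is the one for $\Dmin$. Each active tap contributes $\min\{a_k,\,n-\lfloor d/W\rfloor\}$ shifted words, because its source words start at index $\lfloor d/W\rfloor$ and the loop clips at $a_k$. The final assembly makes one $n$-word pass and adds $n-\lfloor t/W\rfloor$ shifted words per tap, which is exactly the count preceding the proposition; these sum to $B_V$.
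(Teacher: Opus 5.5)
Your proposal is correct and follows essentially the same route as the paper: you compare term by term, showing that each loop count $\min\{a_k,n-\lfloor d/W\rfloor\}$ equals either the useful count $\lceil (m-d)/W\rceil$ or that count plus one, and then observe that the tap at distance $\Dmin$ is active at every stage $k<r$ and contributes exactly $a_k$. The only difference is that you spell out two steps the paper leaves implicit, namely the reindexing of the double sum and the rounding inequality $\lceil x\rceil-\lfloor y\rfloor\le\lceil x-y\rceil+1$.
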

\begin{proof}
A right shift by $d$ occupies coefficient positions $0,\ldots,m-d-1$, which
span $\lceil(m-d)/W\rceil$ words.  A left shift by $t$ occupies positions
$t,\ldots,m-1$, which span the words numbered
$\lfloor t/W\rfloor,\ldots,n-1$.  At stage $k$, the smallest active shift is
$2^k\Dmin$, so the union of all affected destinations spans exactly $a_k$
words.  For a particular feedback shift $d=2^k\Delta_t$, the scalar loop also
requires the source-word index $\lfloor d/W\rfloor$ to remain below the end of
the state.  It therefore visits
\[
  \min\{a_k,n-\lfloor d/W\rfloor\}
\]
destinations.  This is either the useful count
$\lceil(m-d)/W\rceil$ or that count plus one; the latter case performs only a
padding-word contribution.  Summing proves~\eqref{eq:feedback-padding-overhead}.
For each $k<r$, a tap attaining $\Dmin$ is active and contributes
\[
  \min\left\{a_k,
    n-\left\lfloor\frac{2^k\Dmin}{W}\right\rfloor\right\}=a_k.
\]
Summing these contributions proves~\eqref{eq:stage-span-dominated}; therefore
the complete scalar loop decomposition is
$O(n+B_{\rm fb}^{\rm loop}+B_V)$.
The extraction and assembly statements follow from traversing the $m$-bit
input and output states once.
\end{proof}

A word-aligned contribution can be formed from one source word.  A
non-word-aligned contribution can require two adjacent source words, two
machine shifts, and an XOR before accumulation.  Consequently,
Proposition~\ref{prop:word-geometry} records loop geometry; the latency of
aligned and cross-word contributions is ISA-specific and is determined by
measurement.

\begin{proposition}[Temporary space, online generation, and schedule setup]
\label{prop:space-setup}
Excluding the caller-owned input and output, the in-place scalar reduction
can be performed with one $m$-bit state.  The implementation evaluated in
this paper allocates exactly $n+1$ plan-owned scratch-array words: $n$ state
words and one zero sentinel.  An
explicit reusable plan stores
\[
  r\ \text{stage records},\qquad
  S_{\rm fb}\ \text{feedback-shift records},\qquad
  s\ \text{assembly-shift records},
\]
in addition to constant-size metadata.  Such a plan can be constructed in
$O(s+r+S_{\rm fb})$ word-RAM operations and the same asymptotic number of
records.  Alternatively, the factors can be generated online with no
persistent feedback or assembly schedule.  This realization uses $n+1$ state words and
$s$ reusable descriptor records, hence $O(n+s)$ workspace, and performs
$O(s+r+S_{\rm fb})$ additional scalar descriptor-generation operations per
reduction.
\end{proposition}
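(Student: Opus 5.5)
The plan is to prove the four claims of Proposition~\ref{prop:space-setup} separately, all by direct inspection of Algorithm~1 under the in-place scalar realization described after Proposition~\ref{prop:word-geometry}.

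\textbf{Temporary space.} The in-place realization keeps one $m$-bit state $Y$, initialized from $H$ by a single extraction pass. The obstacle is that Algorithm~1 asks for a synchronous stage update, in which every summand reads the immutable $Y_{\rm old}$, and an in-place loop could silently compute a different recurrence. The argument I would use is that every feedback shift is a right shift. Destination word $j$ reads only source words of index $\ge j$. So if the stage loop sweeps destination words in increasing index order, and for each destination word accumulates all active taps before writing, then every source word it reads has not yet been overwritten in this stage. The stage therefore reads $Y_{\rm old}$ exactly and needs no second buffer.

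A cross-word shift may read one word past the end of the state. The zero sentinel serves this read, which is where the extra $+1$ word comes from. Final assembly writes $L+V(Y)$ into the caller-owned output: copy $L$, then XOR in the left shifts $x^tY \bmod x^m$. This reads only $Y$, so no extra state is needed, and padding is masked as in Section~\ref{sec:preliminaries}. This gives exactly $n+1$ plan-owned words.

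\textbf{Plan size and construction cost.} The records follow from the definitions:
\begin{itemize}
\item one record per stage $k<r$;
\item one record per active pair $(k,t)$ with $t\in T_k$, which gives $\sum_k h_k=S_{\rm fb}$ by Theorem~\ref{thm:work};
\item one record per tap for assembly.
\end{itemize}
For construction, I would compute each $\Delta_t=m-t$ in $O(s)$ operations and find $\Dmin$ and $r$ in $O(s+r)$ operations by doubling $\Dmin$ until it reaches $m$. There is no need for a floating-point logarithm. Then, for each $k$, I would scan the current active list, emit the doubled shifts, and drop taps whose shift reaches $m$. Keeping the list in decreasing $\Delta_t$ order would make each survivor test amortized against emitted records. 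A simpler route is to charge every test on a surviving tap to an emitted record and each dropped tap once, for $O(s)$ in total. Either way the cost is $O(s+r+S_{\rm fb})$.

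\textbf{Online realization.} This is the same enumeration performed at every reduction, with descriptors held in one reusable array. The array has size $s$ because $|T_k|\le s$, and it is overwritten at each stage. Together with the $n+1$ state words, the workspace is $O(n+s)$. The per-reduction descriptor overhead is the same $O(s+r+S_{\rm fb})$ scalar operations counted above. The assembly shifts are just the original taps read from the input list, so no assembly schedule is stored.

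The main point requiring care is the in-place synchronous stage: one must check that the loop order respects source-before-overwrite for all taps at once. This is exactly what the remark after Algorithm~1 warns about.
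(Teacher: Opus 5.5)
Your proposal is correct. The space argument is the same as the paper's. Right shifts make each destination depend only on old data at indices at least as large, so a low-to-high sweep realizes the synchronous stage in place, and the sentinel absorbs the adjacent-word read. You add one useful word-level point that the paper's coefficient-level phrasing leaves implicit: all contributions to a destination word must be accumulated before that word is written, because a shift below $W$ reads the destination word itself.

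The genuine difference is in how descriptors are enumerated. For the plan, the paper computes each lifetime $\ell_t$ by doubling. It then obtains all $r$ stage sizes at once from a difference array and one prefix sum, and emits each tap's $\ell_t$ doubled shifts into exactly sized stage buckets. You instead sweep stage by stage over a compacted active list, charging each surviving test to an emitted record and each retirement to its tap. Your route skips the counting pass and emits records already grouped by stage. The cost is that $S_{\rm fb}$ and the stage sizes are known only at the end, so exact allocation needs a counting dry run of the same scan or a growable array; the paper's first pass supplies this directly. Your route also shows that a stage-by-stage builder can attain the $O(s+r+S_{\rm fb})$ bound, which the paper explicitly does not claim for its benchmarked scan-and-sort builder.

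For the online path, the paper uses the canonical increasing tap order. Scanning from the highest tap yields active distances in increasing order, so each stage stops at the first inactive distance with one termination check and no compaction writes. Your compaction variant reaches the same bound without relying on that order, and your sorted-list alternative is essentially the paper's argument.
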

\begin{proof}
For a positive right shift, output coefficient $i$ depends only on old
coefficients with indices at least $i$.  Updating coefficients from low to
high therefore never destroys a value needed by a later destination.  This
permits one in-place state; the extra sentinel is an implementation device for
branch-free adjacent-word reads.

The record counts follow directly from~\eqref{eq:schedule-size}.  For the
setup bound, compute every $\ell_t$ by repeated doubling, at total cost
$O(s+S_{\rm fb})$.  Because each tap is active on the stage interval
$0,\ldots,\ell_t-1$, a difference array followed by one prefix sum computes
all $r$ per-stage counts in $O(s+r)$ operations.  A second pass emits the
$\ell_t$ doubled shifts of each tap into the corresponding stage buckets in
$O(S_{\rm fb})$ operations.

For the online realization, keep one $s$-record descriptor array.  Because
the canonical taps are ordered, traversing them from highest to lowest emits
the active feedback distances in increasing order and stops at the first
inactive distance.  The total number emitted is $S_{\rm fb}$, with at most one
termination check per stage; final assembly emits $s$ descriptors.  This gives
$O(s+r+S_{\rm fb})$ scalar control work without carrying a descriptor set
forward as the schedule for the next reduction.
\end{proof}

The setup bound describes an available two-pass schedule construction, not
compilation of generated code.  The benchmarked reusable-plan builder uses a
simpler stage-by-stage scan, sort, and allocation strategy and is not claimed
to realize this asymptotic setup bound.  The experiments time that concrete
builder and the schedule-free workspace separately, and measure online
descriptor generation inside reduction.

For a compact comparison, let $M(n)$ and $D_M(n)$ denote the work and
dependency depth of an $n$-word polynomial multiplication.  For $U\ne0$, let
$\nu_U=\min\{j\ge1:U^j=0\}$; then
$\nu_U\le\lceil m/\Dmin\rceil$.  The boundary $U=0$ has no dependent
feedback fold.  The methods use different cost primitives, so their constants
and crossovers are determined experimentally.
Table~\ref{tab:method-complexity} collects the resulting method-specific
source-level costs; Section~\ref{sec:evaluation} supplies the corresponding
measured times.

\begin{table}[t]
\caption{Reduction work and dependency depth.  Here
$n=\lceil m/W\rceil$, and fan-in-two depth assumes free fanout.
As defined in Proposition~\ref{prop:word-geometry},
$B_{\rm fb}^{\rm loop}$ counts shifted-word contributions traversed by the
feedback loops, while $B_V$ counts those traversed by final low-part assembly.
The serial row assumes $U\ne0$, and the L\'opez--Dahab row assumes
$m>W$ and $\deg q\le m-W$.  The displayed depths retain the natural dependency
measure of each method rather than defining a common gate-depth metric.}
\label{tab:method-complexity}
\centering
\footnotesize
\renewcommand{\arraystretch}{1.35}
\begin{tabular}{@{}
  >{\raggedright\arraybackslash}p{0.17\textwidth}
  >{\centering\arraybackslash}p{0.34\textwidth}
  >{\centering\arraybackslash}p{0.41\textwidth}@{}}
\toprule
Method & Work & Dependency depth\\
\midrule
FFR
& $C_{\rm word}=O\!\left(n+B_{\rm fb}^{\rm loop}+B_V\right)$
& $D_{\rm fb}=r=\left\lceil\log_2(m/\Dmin)\right\rceil$\\
\midrule
Serial
& $C_{\rm word}=O(sn\nu_U)$
& $D_{\rm fb}\le\nu_U\le\left\lceil m/\Dmin\right\rceil$\\
\midrule
L\'opez--Dahab
& $C_{\rm word}=\Theta\!\left(n(1+s)\right)$
& $D_{\rm loop}=n+O(1)$\\
\midrule
BarrettGF2X
& $C_{\rm word}=2M(n)+O(n)$
& $D=2D_M(n)+O(1)$\\
\midrule
Dense map
& $C_{\rm word}=\Theta(mn)=\Theta(m^2/W)$
& $D=O(\log m)$\\
\bottomrule
\end{tabular}
\renewcommand{\arraystretch}{1}
\end{table}

\subsection{Uniform fixed-weight supports}

We next record what the deterministic geometry implies for one explicit
distribution.  Fix $m$ and $1\le s\le m$, and choose $T$ uniformly among the
$s$-subsets of $\{0,\ldots,m-1\}$.  Equivalently, the distances form a
uniform $s$-subset of $\{1,\ldots,m\}$.  This model permits a zero constant
coefficient and reducible moduli; it is not a distribution on irreducible
polynomials.

\begin{theorem}[Random-support schedule geometry]
\label{thm:random-support}
Under the uniform fixed-weight model,
\begin{align}
  \mathbb E[h_k]
    &=\frac{s}{m}\left\lfloor\frac{m-1}{2^k}\right\rfloor
      <\frac{s}{2^k},                                      \label{eq:expected-hk}\\
  \mathbb E[S_{\rm fb}]&<2s,                               \label{eq:expected-schedule}\\
  \mathbb E[r]&\le\lceil\log_2s\rceil+2,                   \label{eq:expected-depth}\\
  \mathbb E[\Wfb]&<2ms.                                    \label{eq:expected-work}
\end{align}
Moreover, $\mathbb E[W_V]=s(m+1)/2$, and hence the expected total scheduled
coefficient contribution work is $O(ms)$.
\end{theorem}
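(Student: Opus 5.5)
The approach is linearity of expectation over the $m$ possible distances. The set $D=\{\Delta_t:t\in T\}$ is a uniform $s$-subset of $\{1,\ldots,m\}$, so each $d\in\{1,\ldots,m\}$ lies in $D$ with probability exactly $s/m$. Every quantity in the statement is a sum over $t\in T$ of a function of $\Delta_t$, namely $h_k=\sum_t[2^k\Delta_t<m]$, $S_{\rm fb}=\sum_t\ell_t$, $\Wfb=\sum_t\sum_k[m-2^k\Delta_t]_+$ and $W_V=\sum_t\Delta_t$. So for any such $f$ I will write $\mathbb E[\sum_{t}f(\Delta_t)]=\frac{s}{m}\sum_{d=1}^m f(d)$ and reduce everything to deterministic sums over $d$.

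\textbf{Step 1: $h_k$, $S_{\rm fb}$, $W_V$.} The number of $d\ge1$ with $2^kd<m$, i.e.\ $2^kd\le m-1$, is $\lfloor (m-1)/2^k\rfloor$, which gives the equality in \eqref{eq:expected-hk}. The strict bound follows from $\lfloor(m-1)/2^k\rfloor\le (m-1)/2^k<m/2^k$. By \eqref{eq:schedule-size}, $S_{\rm fb}=\sum_{k\ge0}h_k$ with only finitely many nonzero terms, so $\mathbb E[S_{\rm fb}]<\sum_{k\ge0}s/2^k=2s$. Strictness survives because the $k=0$ term is already strict. For $W_V$, $\frac sm\sum_{d=1}^m d=s(m+1)/2$.

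\textbf{Step 2: $\Wfb$.} I will use
\[
\mathbb E[\Wfb]=\frac sm\sum_{k\ge0}\sum_{d\ge1}[m-2^kd]_+ .
\]
For fixed $k$, the inner sum is at most the sum of $m-2^kd$ over $d<m/2^k$, which is at most $m\cdot m/2^k\cdot\frac12$ up to lower-order terms. To keep the bound clean I will instead compare with the integral: $\sum_{d\ge1}[m-2^kd]_+\le\int_0^{m/2^k}(m-2^kx)\,dx=m^2/2^{k+1}$, since the summand is decreasing in $d$. Summing over $k$ gives $\mathbb E[\Wfb]\le\frac sm\cdot m^2=ms$, which is stronger than the claimed $<2ms$. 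Even the crude bound $[m-2^kd]_+<m$ times the count $\lfloor(m-1)/2^k\rfloor$ already yields $<2ms$ from Step 1. I will present the crude version since it reuses \eqref{eq:expected-hk} directly: $\Wfb\le m\sum_k h_k=mS_{\rm fb}$, with strict inequality in expectation. The final $O(ms)$ claim for total work then follows by adding $\mathbb E[W_V]$.

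\textbf{Step 3: $\mathbb E[r]$.} This is the main obstacle, since $r=\lceil\log_2(m/\Dmin)\rceil$ depends on a minimum rather than a sum. My plan is a tail sum: $r\ge j+1$ iff $2^j\Dmin<m$ iff $h_j\ge1$. Hence
\[
\mathbb E[r]=\sum_{j\ge0}\Pr[h_j\ge1]\le\sum_{j\ge0}\min\{1,\mathbb E[h_j]\}\le\sum_{j\ge0}\min\{1,s/2^j\},
\]
using Markov's inequality and \eqref{eq:expected-hk}. With $J=\lceil\log_2 s\rceil$, the terms $j<J$ contribute at most $J$, and the terms $j\ge J$ contribute at most $\sum_{j\ge J}s/2^j=2s/2^J\le2$. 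This gives $\mathbb E[r]\le\lceil\log_2 s\rceil+2$. Two details need checking: that the case $U=0$ (where $r=0$ and every $h_j=0$) is consistent with the tail identity, and that the identity $r=\#\{j:h_j\ge1\}$ matches the minimality statement in Theorem~\ref{thm:inverse}.
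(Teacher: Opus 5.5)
Your proposal is correct and follows the paper's proof essentially step for step. You use linearity of expectation with inclusion probability $s/m$ for \eqref{eq:expected-hk}, \eqref{eq:expected-schedule} and $\mathbb E[W_V]$, the deterministic bound $\Wfb\le mS_{\rm fb}$ for \eqref{eq:expected-work}, and the tail sum $r=\sum_k\mathbf 1_{\{h_k>0\}}$ with Markov's inequality split at $\lceil\log_2 s\rceil$ for \eqref{eq:expected-depth}. Your side remarks are also sound: the integral comparison does sharpen the bound to $\mathbb E[\Wfb]\le ms$, and the two details you flag ($U=0$, and $r=\#\{j:h_j\ge1\}$) both check out.
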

\begin{proof}
There are exactly $\lfloor(m-1)/2^k\rfloor$ distances $d\in\{1,\ldots,m\}$
with $2^kd<m$.  Each distance belongs to the random support with probability
$s/m$, proving~\eqref{eq:expected-hk}.  Summing over $k\ge0$ and using
$S_{\rm fb}=\sum_kh_k$ gives~\eqref{eq:expected-schedule}.

Since $r=\sum_{k\ge0}\mathbf 1_{\{h_k>0\}}$, Markov's inequality and
\eqref{eq:expected-hk} give
\[
  \Pr(h_k>0)\le\min\{1,s/2^k\}.
\]
Splitting the sum at $k=\lceil\log_2s\rceil$ and summing the geometric tail
proves~\eqref{eq:expected-depth}.  The deterministic inequality
$\Wfb\le mS_{\rm fb}$ proves~\eqref{eq:expected-work}.  Finally, every distance
has inclusion probability $s/m$, so
\[
  \mathbb E[W_V]
  =\frac{s}{m}\sum_{d=1}^m d
  =\frac{s(m+1)}{2}.
\]
\end{proof}

Exact work depends on every tap distance beyond the summary coordinates
$(h,\Dmin)$.  The bounds above therefore describe families and schedules,
while method selection at fixed $(m,h,\Dmin)$ can still vary with the complete
tap set.

\section{Prior Art and Novelty Boundary}
\label{sec:related}

Reciprocal and triangular Toeplitz inversion for polynomial division are
classical~\cite{BiniPan1985}, as are formal-power-series inversion
\cite{Kalorkoti1993}.  Parallel triangular Toeplitz and recurrence solvers
already use doubling, bandwidth, or prefix structure
\cite{Bini1984,Morf1980,KoggeStone1973,HoLee1990}. These works establish the
classical status of finite geometric inversion and logarithmic-depth
recurrence solving. Our question is whether a support-nondensifying fixed-state
realization retains useful support geometry without dense reciprocals,
polynomial multiplication, or expanded state.

The direct predecessor of the present method is our Suwako algorithm for
trinomials $x^m+x^t+1$~\cite[Lemma~2, Eq.~(5)]{ZhouEtAl2025Suwako}.  It already replaces the
$\Theta(m/(m-t))$ serial feedback chain by logarithmically many doubling steps
without modulus-specific precomputation.  The present paper does not claim
that trinomial mechanism anew: it derives the multi-tap operator equation,
shows that every Frobenius factor contains only the surviving doubled images
of the original taps, and develops exact geometry and packed implementations
for arbitrary monic binary moduli.

The closest external recurrence-level antecedent is the term-preserving look-ahead transformation
for LFSRs: in characteristic two, replacing a generator by its square doubles
the feedback distances without increasing its term count~\cite{LinEtAl2013}.
Parallel LFSR and CRC architectures also construct equivalent state-space
updates for arbitrary generator polynomials~\cite{AyinalaParhi2011}. These
provide the nearest recurrence-level antecedents for term-preserving doubling
and parallel feedback. They transform or parallelize recurrence/state-space
updates, whereas Algorithm~1 applies successive factors of an implicit inverse
to a forced length-$m$ triangular system on a fixed $m$-bit state.

Sparse binary-field reduction includes word- and bit-parallel polynomial-basis
methods~\cite{Wu1999} and strong specialized formulae
\cite{LopezDahab2000}.  Niehues, Custodio, and Panario give a uniform
top-down reducer for arbitrary low-weight moduli and explicitly observe that
tap placement can induce a long critical path~\cite{NiehuesEtAl2018}.
Finite-field multiplier architectures additionally use multi-degree
reduction, subexpression sharing, and balanced XOR trees~\cite{Meher2009}.
Our measured comparison accordingly keeps serial folding and applicable
L\'opez--Dahab reduction as direct baselines.

Barrett and Montgomery provide multiplication-based alternatives, including
restricted precomputation-free families~\cite{KnezevicEtAl2008}. FFR differs
in supporting arbitrary monic binary moduli without a materialized reciprocal
or dense matrix: its planned realization has a separately measured compact
schedule, while its online realization retains only reusable state and
descriptor scratch and generates shifts inside each reduction.

Repeated Frobenius powering in factorization gives the closest application
context.  Von zur Gathen and Gerhard combine theory, implementation, storage,
and complete factorization over $\Ftwo$~\cite{GathenGerhard2002}; irreducible
polynomial construction gives another context~\cite{Shoup1990}.

Relative to these antecedents, FFR combines arbitrary monic-modulus
correctness, support-nondensifying Frobenius factors, logarithmic feedback depth,
execution without a materialized dense reciprocal or matrix, and exact
geometry from all tap distances.

\section{Implementations and Evaluation}
\label{sec:evaluation}

\subsection{Implementations and validation}

The comparison uses the direct operation $A\mapsto A\bmod g$ for
$\deg A<2m$.  Serial folding repeatedly eliminates the high part through the
nonleading support; the ordinary-loop L\'opez--Dahab method instead uses
word-oriented shift/scatter operations where applicable.  BarrettGF2X uses
polynomial products and a modulus-dependent reciprocal, whereas the dense
baseline materializes a fixed-modulus linear map.  Table~\ref{tab:families}
summarizes their applicability and role.

\begin{table}[t]
\caption{Compared direct-reduction methods.  Throughout, $g$ is as
in~\eqref{eq:modulus} and $\deg A<2m$.}
\label{tab:families}
\vspace{0.5em}
\centering
\footnotesize
\renewcommand{\arraystretch}{1.12}
\begin{tabular}{@{}
  >{\centering\arraybackslash}m{0.16\textwidth}
  >{\centering\arraybackslash}m{0.12\textwidth}
  >{\centering\arraybackslash}m{0.20\textwidth}
  >{\raggedright\arraybackslash}m{0.435\textwidth}@{}}
\toprule
Method & Status & Applicability & Use in evidence\\
\midrule
\multirow{2}{*}{FFR} & Planned & \eqref{eq:modulus}
& Direct candidate at all 1,096 points.\\
\cmidrule(l){2-4}
& Online & \eqref{eq:modulus}
& Schedule-free 45-point study.\\
\midrule
Serial folding & Native & \eqref{eq:modulus}
& Direct on the 392-point subset.\\
\midrule
BarrettGF2X & Native/gf2x & \eqref{eq:modulus}
& Direct at all 1,096 points.\\
\midrule
L\'opez--Dahab & Native & $m>W$, $\deg q\le m-W$
& Direct at every measured applicable point.\\
\midrule
Dense map & Native & $g$ fixed
& Diagnostic map; $\Theta(m^2/W)$ words and row-word work; 64 MiB cap.\\
\bottomrule
\end{tabular}
\renewcommand{\arraystretch}{1}
\end{table}

Generated fixed-modulus code and Montgomery REDC are excluded: the former is
a specialization contract, while the latter computes $AR^{-1}\bmod g$ rather
than direct reduction.
The planned and online FFR realizations share scalar kernels, while FFR and
serial folding share the packed-word shift/XOR primitive.  Before timing,
every timed binary reducer is compared with independent long division;
Appendix~\ref{app:validation} records these checks and the extended algebraic
suites.

\subsection{Experimental contract}
\label{sec:experimental-contract}

We measured portable scalar C on one recorded x86-64 host, pinned to logical
CPU~0 and compiled with GCC~16.1.1 using
\texttt{-O3 -std=c11 -Wall -Wextra}; Barrett linked
\texttt{/usr/lib/libgf2x.so.3.0.0}.  Every row records the governor,
energy-performance preference, and turbo state, and conclusions are scoped to
this platform.

Each support uses eight materialized inputs $A=L+x^mH$ with independent uniform
$m$-bit halves.  After one warm-up, 31 or 127 complete trials use twelve batch
repetitions and cyclic method order.  Only \texttt{reduce\_into} is timed;
setup, input generation, allocation, checking, and output handling are
excluded, and no observation is removed.

The controlled corpus contains 1,096 constant-free supports at
\[
  m\in\{128,512,2048,8192,32768,131072\}.
\]
Put
\[
  D_m=\{2^j:0\le j\le\log_2(m/2)\},\qquad
  H_0=\{2,3,5,9,17,33,65\},
\]
and set $H_m=H_0\cup E_m$, where the frozen high-weight additions are
\begin{align*}
E_{128}&=\{81,97,113,121\},\\
E_{512}&=\{97,129,193,257,385\},\\
E_{2048}=E_{8192}&=\{97,129,193,257,385,513,769,1025\},\\
E_{32768}&=\{97,129,161,193,225,257,321,385,513,769,1025\},\\
E_{131072}&=\{97,129,193,257,385,513,641,769,897,1025\}.
\end{align*}
For every $(h,\Dmin)\in H_m\times D_m$ satisfying
$h-1\le m-\Dmin$, the corpus contains the single support
\[
  T(m,h,\Dmin)
  =\left\{\left\lfloor\frac{j(m-\Dmin)}{h-1}\right\rfloor:
    1\le j\le h-1\right\}.
\]
This construction fixes the nearest tap at $m-\Dmin$ and yields exactly 1,096
distinct supports.  The sets $E_m$ are grid-level crossover refinements fixed
after preliminary screening; no individual support was selected by its
measured winner.

Every point compares FFR with Barrett, and L\'opez--Dahab is present exactly at
measured points with $\Dmin\ge64$.  Serial is retained on 392 manifest-defined
points: at all six degrees, the complete grid
$H_0\times\{1,2,4,8,16,32\}$, together with, for
$m\in\{512,2048,8192,32768\}$, the grid
$H_0\times\{2^j:128\le2^j\le m/2\}$.  These are complete grids rather than
pointwise runtime selections.

All remaining cohorts use a no-Serial contract
because its dependent iteration count can reach $\lceil m/\Dmin\rceil$ and its
tapwise work grows with $h$, making full extension impractical.
Winners range only over methods measured at that point.

Each method is summarized by its trial median and a deterministic
10,000-resample bootstrap 95\% interval.  A unique winner requires relative
median-interval half-width at most 1\%, a margin of at least 1\% over every
competitor, and paired ratio intervals strictly below one; failures remain
timing instability, operational ties, or statistical ties.

\FloatBarrier
\subsection{Setup, storage, and online generation}

\begin{table}[t]
\caption{Median setup time and requested plan-owned bytes at
$(h,\Dmin)=(9,1)$.  Each entry is ``nanoseconds / bytes.''}
\label{tab:setup-storage}
\centering
\small
\begin{tabular}{@{}rrrrr@{}}
\toprule
$m$ & trials & FFR & Serial & Barrett\\
\midrule
128    & 127 & 596.5 / 744    & 119.5 / 416   & 782.0 / 184\\
\midrule
512    & 127 & 719.5 / 888    & 123.5 / 512   & 4770.5 / 376\\
\midrule
2048   & 127 & 887.5 / 1176   & 137.0 / 896   & 50799.0 / 1144\\
\midrule
8192   & 127 & 1152.0 / 2040  & 149.0 / 2432  & 558369.5 / 4216\\
\midrule
32768  & 127 & 1312.0 / 5208  & 235.0 / 8576  & 7760258.5 / 16504\\
\midrule
131072 & 31  & 2832.5 / 17592 & 856.5 / 33152 & 119030566.0 / 65656\\
\bottomrule
\end{tabular}
\end{table}

Table~\ref{tab:setup-storage} reports setup and requested plan-owned storage
under separately frozen contracts.
The storage counts exclude allocator overhead, shared modulus storage,
benchmark buffers, and transient library workspace.
The setup measurements show a tradeoff hidden by steady-state timings:
Barrett's reciprocal construction becomes expensive at large $m$, while FFR
retains a modest schedule-construction cost.  For an explicitly chosen reuse
count $K$, the artifact separately derives
\[
  T_{\mathrm{total}}(K)=T_{\mathrm{setup}}+K T_{\mathrm{reduce}};
\]
amortized values are reported only for explicitly stated reuse counts $K$.

The online/planned study uses complete constant-free Cartesian slices:
$h\in\{3,9,65\}$ for $m=128$ and $h\in\{3,9,65,513\}$ for
$m\in\{2048,32768,131072\}$, each crossed with
$\Dmin\in\{1,64,m/4\}$.  The resulting $9+36=45$ supports use the
deterministic spread rule above.

Planned and online FFR received the same eight inputs, used the same scalar
kernels, and
were cyclically ordered over 31 retained trials; the online path regenerated
all descriptors inside the timed reduction.  The median
$T_{\rm online}/T_{\rm planned}$ over each degree slice was $1.676$, $1.086$,
$1.006$, and $1.002$, respectively.  Thus descriptor generation is material
at small degree but is within about one percent of planned steady-state time
in the two largest slices; five individual paired intervals contain one.  If
setup and one reduction are combined as the declared derived quantity
$T_{\rm setup}+T_{\rm reduce}$, online FFR is lower at all 45 points and has
overall median ratio $0.915$.  This $K=1$ value is derived from the separately
measured components.  On this portable-C platform, online descriptor
generation therefore approaches planned FFR performance at large degrees,
while online setup still includes workspace allocation.

\subsection{Measured winner regions}

Figure~\ref{fig:winner-panels} gives the complete measured phase diagram.  Its
axes are the two principal geometric parameters rather than ordinal grid
indices.  Every block is one observed support; blank regions were not measured
and are not interpolated.  To expose the predicted regime decomposition, we
overlay a fitted cost model.  At each fixed $m$, a deterministic alternating
split of the sorted grid fits affine times for FFR from its portable scalar
block-work count and for L\'opez--Dahab from its shift/scatter count; Barrett
is represented by its fixed-$m$ calibration median.  More precisely, the
predictors and fitted times are
\begin{align}
  C_{\rm FFR}&=\widehat B_{\rm fb}+B_V,
  &\widehat\tau_{\rm FFR}&=\alpha_{{\rm FFR},m}
    +\beta_{{\rm FFR},m}C_{\rm FFR},\notag\\
  C_{\rm LD}&=n(h-1)=ns,
  &\widehat\tau_{\rm LD}&=\alpha_{{\rm LD},m}
    +\beta_{{\rm LD},m}C_{\rm LD},\notag\\
  &&\widehat\tau_{\rm B}&=b_m,
  \label{eq:winner-time-model}
\end{align}
where each $\alpha$ and $\beta$ is fitted on the calibration half and $b_m$
is the Barrett calibration median.  Here $C_{\rm FFR}$ is a useful-block-work
predictor, not the exact executed loop count $B_{\rm fb}^{\rm loop}+B_V$;
their difference is the padding contribution bounded in
Proposition~\ref{prop:word-geometry}.  If $\mathcal A(p)$ is the set of methods
available at grid point $p=(h,\Dmin)$, its predicted region for method $j$ is
defined by
\begin{equation}
  \mathcal R_j
  =\{p:\widehat\tau_j(p)\le\widehat\tau_i(p)
       \text{ for every }i\in\mathcal A(p)\}.
  \label{eq:winner-region-inequalities}
\end{equation}
The minimum predicted time labels each measured coordinate, and a line is
drawn only where adjacent coordinates belong to different predicted regions.
Thus the blocks remain measurements whereas the lines are discrete,
descriptive approximations to the pairwise interfaces induced
by~\eqref{eq:winner-region-inequalities}.

\begin{figure}[t]
\centering
\includegraphics[width=\textwidth]{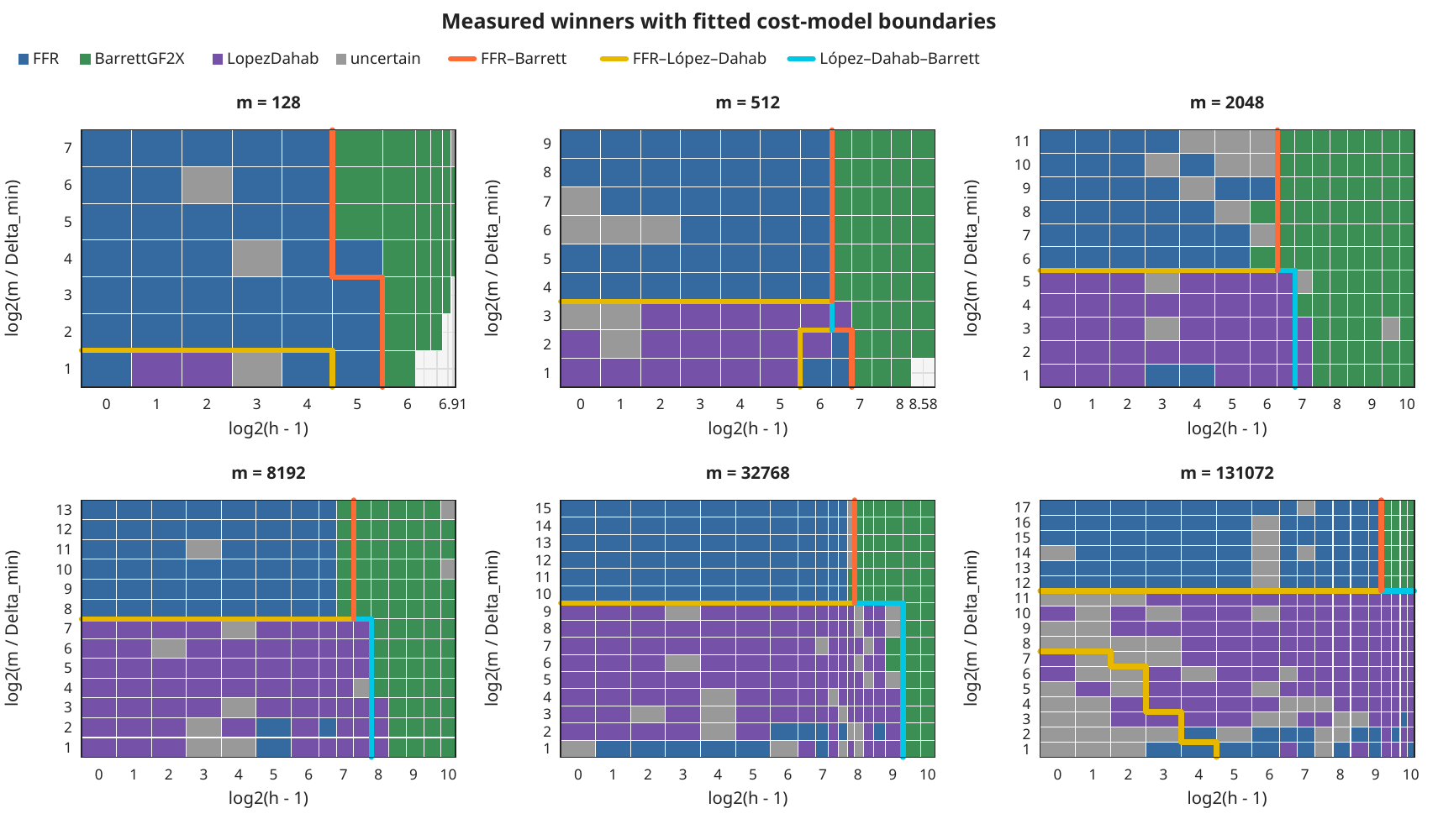}
\caption{Measured winner regions for six fixed degrees.  The horizontal axis
is $\log_2(h-1)$ and the vertical axis is
$\log_2(m/\Dmin)$.  Colors identify the unique winner among the methods
available at that point; gray cells are uncertain.  FFR and Barrett are present
everywhere, whereas L\'opez--Dahab is admitted only at its tested
$\Dmin\ge64$ boundary.  Thick lines mark pair-specific interfaces predicted
by the fixed-$m$ model~\eqref{eq:winner-time-model}, separating regions defined
by the inequalities~\eqref{eq:winner-region-inequalities}. The lines connect
only adjacent measured coordinates and leave the measured classifications
unchanged.}
\label{fig:winner-panels}
\end{figure}

Of the 1,096 cells, 983 have a unique winner: FFR wins 321, Barrett wins 305,
and L\'opez--Dahab wins 357.  The remaining 113 cells comprise 103
timing-unstable cells and 10 operational ties; no cell is classified as a
statistical tie.  The largest panel, $m=131072$, deliberately retains only 31
trials and consequently has 52 unstable cells out of 289; these cells remain
unassigned in the phase diagram.

The diagram separates three regimes.  FFR occupies the sparse,
difficult-feedback region.  As weight increases, multiplication-based Barrett
eventually wins even when $\Dmin$ is small.  When $\Dmin\ge64$,
L\'opez--Dahab increasingly dominates the friendly-feedback region as $m$
grows.  Across rows with $\Dmin<64$, the observed persistent Barrett onset is
$h=33$--$65$, $97$, $65$--$97$, $129$, $225$--$257$, and $641$ for the six
successive degrees.  In the L\'opez--Dahab-applicable rows the corresponding
onsets are $65$, $129$, $129$--$193$, $257$--$385$, $513$--$769$, and beyond
the largest sampled weight $1025$.  These are the sampled crossover locations
on the recorded platform.

The $\Dmin=1$ slice permits a direct check of the fitted FFR--Barrett
interface.  For its controlled support, write
$C_m(h):=C_{\rm FFR}(T(m,h,1))
=\widehat B_{\rm fb}(T(m,h,1))+B_V(T(m,h,1))$, using the exact sums in
Proposition~\ref{prop:word-geometry}.
Since the fitted slopes are positive, the model assigns the point to Barrett
rather than FFR exactly when
\begin{equation}
\begin{aligned}
  b_m\le\alpha_{{\rm FFR},m}+\beta_{{\rm FFR},m}C_m(h)
  &\quad\Longleftrightarrow\quad
  C_m(h)\ge C_m^\star
  :=\left\lceil\frac{b_m-\alpha_{{\rm FFR},m}}
                         {\beta_{{\rm FFR},m}}\right\rceil,\\
  \widehat h_m&:=\min\{h\in H_m:C_m(h)\ge C_m^\star\}.
\end{aligned}
  \label{eq:barrett-ffr-crossover}
\end{equation}
Table~\ref{tab:portable-results} compares this prediction with the measured
persistent crossover.  Five of the six degrees agree exactly; at $m=8192$
the model predicts the next sampled weight, $193$ instead of $129$.

\begin{table}[t]
\caption{Crossover summary on the constant-free $\Dmin=1$ slice.  FFR time
and Barrett/FFR use $h=9$; model thresholds follow
\eqref{eq:barrett-ffr-crossover}, and $h_m^{\rm obs}$ is the measured
persistent threshold.}
\label{tab:portable-results}
\centering
\small
\begin{tabular}{@{}rrrrrr@{}}
\toprule
$m$ & FFR (ns) & Barrett/FFR & $C_m^\star$ & $\widehat h_m$ & $h_m^{\rm obs}$\\
\midrule
128    & 64.1    & 2.164  & 123     & 33  & 33\\
\midrule
512    & 272.9   & 3.807  & 945     & 97  & 97\\
\midrule
2048   & 1079.2  & 4.247  & 3631    & 97  & 97\\
\midrule
8192   & 4390.2  & 8.654  & 26356   & 193 & 129\\
\midrule
32768  & 17008.4 & 16.768 & 192509  & 257 & 257\\
\midrule
131072 & 68167.1 & 37.712 & 1804748 & 641 & 641\\
\bottomrule
\end{tabular}
\end{table}

\subsection{Does the work model predict FFR time?}

Figure~\ref{fig:work-runtime} compares the formal scheduled coefficient work
$\Wfb$ with measured FFR time.  Within each fixed degree, the Spearman
correlation ranges from $0.9868$ to $0.9934$.  Thus the complete tap geometry
captured by $\Wfb$ orders these controlled scalar runtimes unusually well.
The residual scatter records effects beyond the source-work model, while the
correlation summarizes runtime ordering under that model.

\begin{figure}[b]
\centering
\includegraphics[width=\textwidth]{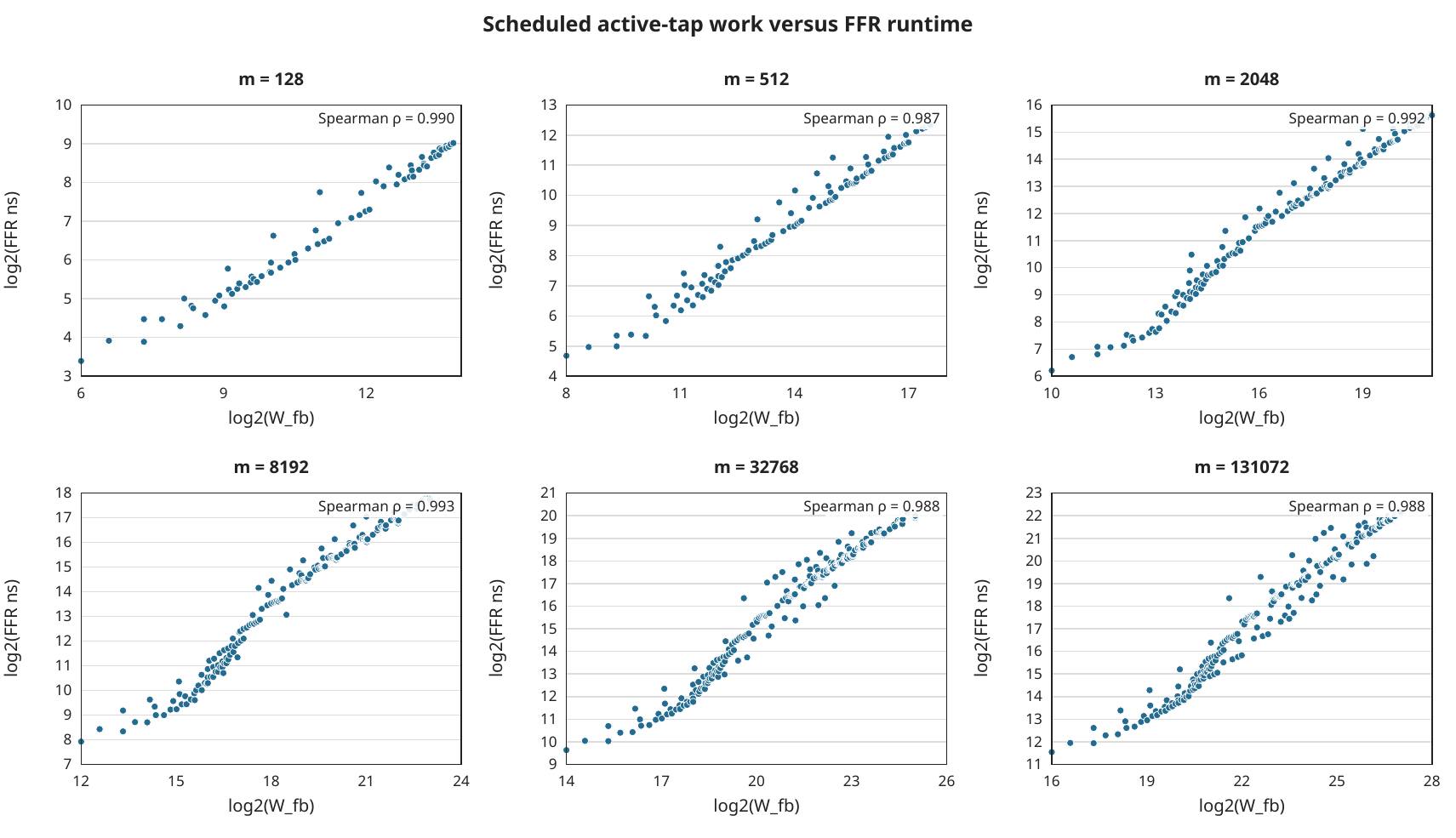}
\caption{Scheduled active-tap work versus median FFR reduction time for all
1,096 controlled supports, shown separately at fixed $m$.  Both axes use
base-two logarithms.  The displayed $\rho$ is the within-panel Spearman
correlation.}
\label{fig:work-runtime}
\end{figure}

\FloatBarrier
\subsection{End-to-end irreducibility testing}

We finally test whether the reduction result survives inside one complete
computational-algebra workload.  For a power-of-two degree $m$, Rabin's
criterion computes the chain of $m$ modular squares beginning at $x$, checks
\[
  \gcd\bigl(x^{2^{m/2}}-x,g\bigr)=1,
  \qquad
  x^{2^m}=x\pmod g.
\]
We deterministically selected one Sage-certified irreducible modulus at each
$m\in\{128,512,2048,8192\}$, always with $(h,\Dmin)=(9,1)$.  At each degree,
the generator initializes a pseudorandom generator with the fixed seed recorded
in the manifest, includes taps $0$ and $m-1$, samples the other six taps
uniformly without replacement from $\{1,\ldots,m-2\}$, skips repeated
supports, and retains the first candidate for which Sage
\texttt{is\_irreducible()} returns true; the accepted-attempt index is also
recorded.  No
timing result enters this search.  Each matched path uses the same scalar
bit-dilation squarer and NTL GCD code and changes only the reducer.  Its primary
interval includes reducer setup, all modular
squares, the GCD, and the final equality test.  NTL
\texttt{IterIrredTest} is timed separately as an optimized complete-library
baseline.

\begin{figure}[b]
\centering
\includegraphics[width=\textwidth]{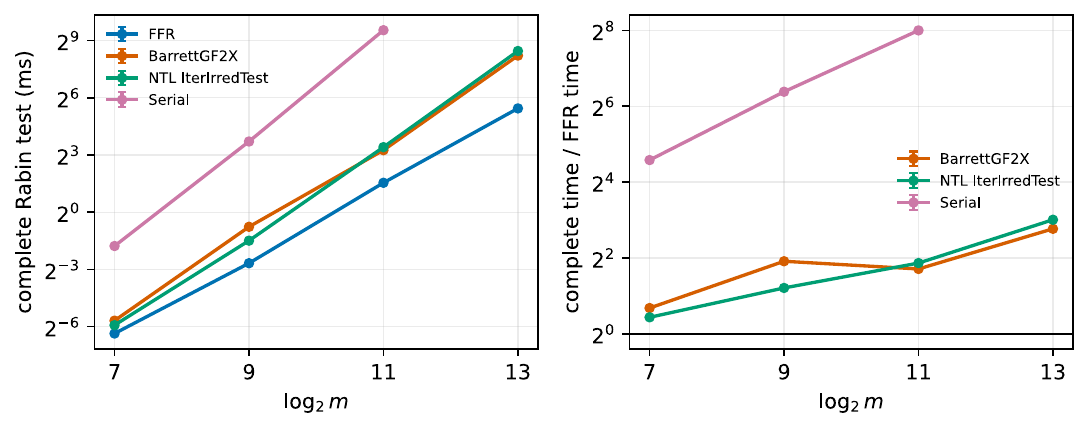}
\caption{Complete Rabin irreducibility testing on four deterministically
selected, Sage-certified sparse moduli with $(h,\Dmin)=(9,1)$.  The left panel
shows median end-to-end time with bootstrap 95\% intervals; the right panel
normalizes each complete implementation by FFR.  Every point contains 31
trials.  Serial has no $m=8192$ point because a retained smoke trial took about
47.5 seconds.}
\label{fig:rabin-e2e}
\end{figure}

Figure~\ref{fig:rabin-e2e} shows a stable end-to-end effect.  Across increasing
$m$, NTL takes $1.35$, $2.31$, $3.64$, and $8.04$ times the FFR time; the
matched Barrett path takes $1.60$, $3.76$, $3.27$, and $6.81$ times the FFR
time.  All paired ratio intervals lie strictly above one.  The modular-square
chain accounts for $85.5\%$ of FFR time at $m=128$ and $98.9\%$ at $m=8192$,
so the measured difference remains central to the complete test rather than
being hidden by setup or the terminal GCD.

\FloatBarrier
\section{Conclusion}
\label{sec:conclusion}

We recast reduction by an arbitrary monic binary polynomial as inversion of a
nilpotent feedback operator.  Characteristic-two Frobenius powers yield exact
feedback depth $\lceil\log_2(m/\Dmin)\rceil$ and work governed by every tap
distance without materializing a reciprocal or dense reduction matrix.  The
doubled shifts may be retained in a reusable plan or generated online with
$O(n+s)$ workspace and no persistent modulus-specific schedule.  The analysis
separates coefficient work, packed-word loop geometry, descriptor-generation
work, setup, and temporary space.

The portable scalar experiments show that this tradeoff has nontrivial
algorithm-selection regions.  FFR wins for sparse moduli with
difficult feedback, gf2x-backed Barrett takes over as weight grows, and
ordinary-loop L\'opez--Dahab is often strongest when its feedback-distance
condition applies.  The formal work measure is strongly monotone with FFR
runtime on the controlled corpus, but the measured boundaries retain unstable
cells and are scoped to the recorded platform.  Representative slices further
show that online descriptor generation is
visible at small degree but approaches planned FFR steady-state time at the
largest measured degrees.  On the four certified high-tap moduli,
the reduction advantage survives a complete Rabin irreducibility test and
widens relative to NTL over the measured degree range, providing a concrete
end-to-end computational-algebra result.

\section*{Software and Data Availability}
The implementation, deterministic manifests, raw benchmark data, and
reproduction scripts underlying the reported results are archived in the
\href{https://github.com/junyu33/ExSuwako/releases/tag/moc-artifact-v3}
{\texttt{moc-artifact-v3} GitHub release}.

\section*{Use of Artificial Intelligence}
OpenAI Codex (GPT-5.6 Sol) was used during the research and manuscript-preparation
workflow to assist with literature-search planning and synthesis,
mathematical exploration and statement auditing, software and test
development, benchmark-analysis and plotting code, and manuscript drafting
and editing.  The authors reviewed the resulting sources, arguments, code, and
reported data and assume full responsibility for the manuscript.

\appendix

\section{Coefficient-Algebra Extension}
\label{app:coefficient-algebra}

Let $R$ be a commutative characteristic-two algebra and let
\[
  g=x^m+\sum_{t=0}^{m-1}a_tx^t\in R[x]
\]
be monic.  Define $N$ as before on $R^m$, and put
\[
  U=\sum_{t=0}^{m-1}a_tN^{m-t},
  \qquad
  V(Y)=\sum_{t=0}^{m-1}a_t(x^tY\bmod x^m).
\]

\begin{theorem}[Coefficient-algebra factorization]
For every $k\ge0$,
\begin{equation}
  U^{2^k}=\sum_{t=0}^{m-1}a_t^{2^k}N^{2^k(m-t)}.
  \label{eq:coefficient-frobenius}
\end{equation}
If $U^{2^r}=0$, then
\[
  (I+U)^{-1}=\prod_{k=0}^{r-1}(I+U^{2^k}).
\]
Consequently, applying these factors to $H$ and returning $L+V(Y)$ computes
$A\bmod g$ for every $A=L+x^mH$ of degree below $2m$.
\end{theorem}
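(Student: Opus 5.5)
The proof runs through the three earlier arguments in order, replacing $\Ftwo$ by the commutative characteristic-two algebra $R$ throughout. I work in the $R$-algebra $\mathrm{End}_R(R^m)$. The scalars $a_t$ act as multiplication on $R^m$, and the truncated shifts $N^d$ are $R$-linear. Hence every $a_t$ commutes with every $N^d$, and $N^aN^b=N^{a+b}$ as before. It follows that the summands $X_t=a_tN^{m-t}$ pairwise commute, and $X_t^2=a_t^2N^{2(m-t)}$.

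Since $R$ has characteristic two, the freshman's-dream identity $(X_1+\cdots+X_s)^2=\sum X_i^2$ holds for commuting elements. The cross terms appear as $X_iX_j+X_jX_i=2X_iX_j=0$. Iterating this identity $k$ times, exactly as in Lemma~\ref{lem:frobenius}, gives \eqref{eq:coefficient-frobenius}, where shifts of length at least $m$ vanish. Unlike Lemma~\ref{lem:frobenius}, I make no non-cancellation claim. The identity is exact as stated, and surviving summands may vanish when $a_t^{2^k}=0$. This is why the statement assumes $U^{2^r}=0$ rather than computing $r$.

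For the inverse, I reuse the telescoping argument of Theorem~\ref{thm:inverse}. The identity $(I+X)^2=I+X^2$ holds because $2X=0$. By induction on $j$ it gives $(I+U)\prod_{k<j}(I+U^{2^k})=I+U^{2^j}$. Setting $j=r$ and using the hypothesis yields $I$. The factors are polynomials in $U$, so they commute, and the product is a two-sided inverse.

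For the consequence, I re-derive Lemma~\ref{lem:decomposition} over $R$. I write $q=\sum a_tx^t$ and check that $qY=V(Y)+x^mU(Y)$ for $\deg Y<m$. The check is coefficientwise: the part of $a_tx^tY$ at or above degree $m$, divided by $x^m$, is $a_tN^{m-t}Y$. The same computation then gives $A-gY=L+V(Y)+x^m(H+Y+U(Y))$, using $-1=1$ in $R$. With $Y=(I+U)^{-1}H$ the high part vanishes, and $\deg(L+V(Y))<m$. Division by a monic polynomial over any commutative ring has a unique remainder of degree less than $m$, so this remainder equals $A\bmod g$.

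The main obstacle I anticipate is only bookkeeping in this last step. I must confirm that uniqueness of division needs monicity alone and not a field, and that the high-part identification $U$ matches the definition of $N$ including the $t=0$ term. The algebraic steps are routine once commutativity of $a_t$ with $N$ has been established.
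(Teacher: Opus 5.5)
Your proposal is correct and follows the paper's proof essentially step for step: the commuting summands $a_tN^{m-t}$ make squaring additive in characteristic two, the telescoping identity $(I+U)\prod_{k<r}(I+U^{2^k})=I+U^{2^r}=I$ gives the inverse, and $A-gY=L+V(Y)+x^m(H+Y+U(Y))$ together with uniqueness of the remainder modulo a monic polynomial gives $A\bmod g$. You simply make explicit what the paper leaves implicit, namely why the scalars $a_t$ commute with the truncated shifts, why the cross terms cancel, and why monic division has a unique remainder over an arbitrary commutative ring.
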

\begin{proof}
The summands $a_tN^{m-t}$ commute.  Frobenius is therefore additive on their
sum, and iterating it gives~\eqref{eq:coefficient-frobenius}.  The telescoping
identity
\[
  (I+U)\prod_{k=0}^{r-1}(I+U^{2^k})=I+U^{2^r}=I
\]
proves the inverse formula.  Finally,
\[
  A-gY=L+V(Y)+x^m\bigl(H+Y+U(Y)\bigr)
\]
in characteristic two, so $H=(I+U)Y$ leaves the unique degree-below-$m$
remainder $L+V(Y)$.
\end{proof}

This extension preserves the formal stage schedule; its actual nonzero support
may shrink.  In a nonreduced algebra a coefficient may satisfy
$a_t\ne0$ and $a_t^{2^k}=0$; for example,
$\varepsilon^2=0$ in $\Ftwo[\varepsilon]/(\varepsilon^2)$.  Thus the exact
binary expression $\Wfb$ is scheduled work in this setting, while an
implementation that removes zero coefficients may do less arithmetic.  In
odd prime characteristic $p$, the corresponding radix-$p$ factorization uses
$\sum_{j=0}^{p-1}(-U^{p^k})^j$, and the final remainder is $L-V(Y)$ rather
than the characteristic-two expression $L+V(Y)$.

\section{Validation and Additional Results}
\label{app:validation}

The deterministic algebraic suites use independent polynomial long division
or direct finite geometric series as their oracles.  Over $\Ftwo$, 20,000
random cases with $m\le128$ and all 299,592 monic-modulus/input pairs for
$m\le6$ pass.  Over $\Ftwo[\varepsilon]/(\varepsilon^2)$, 20,000 random cases
and all 266,304 cases for $m\le3$ pass; 13,573 random instances exhibit a
nonzero coefficient killed by a Frobenius power, motivating the support
qualification above.  A further 20,000 random $\mathbb F_4$ cases and 10,000
random $\mathbb F_3$ cases agree with independent inverse and reduction paths.
Exhaustive enumeration of all 262,125 nonempty constant-free supports for
$2\le m\le18$ finds no counterexample to the scheduled-work formula or its
coarse bound.

The repository entry point \texttt{make check} runs these suites together with
the native differential, boundary, masking, dense, and L\'opez--Dahab checks;
\texttt{make check-sanitize} separately runs sanitizer builds of the native
reducers and generated-reducer checks.  The exact artifact entry points are
\begin{center}
\small\ttfamily
make artifact-cost-model; make artifact-paper; make artifact-rabin;\\
make artifact-microbenchmark ARTIFACT\_CPU=0; make artifact-online.
\end{center}
The 67,576-row primary dataset and its 26 derived outputs are hash-locked.  From
a detached clean checkout at commit \texttt{8b21c90}, all 26 outputs, the
465-row Rabin artifact, and the 1,395-row, 45-support online artifact reproduced
with exact hash agreement while the full \texttt{make check} and
\texttt{make check-sanitize} suites passed.  Every
retained primary-dataset and Rabin row records its seed, taps, command,
compiler and gf2x paths, binary digest, affinity, and frequency policy.  The
online artifact records its applicable execution metadata, including command,
compiler flags, binary digest, affinity, and frequency policy; its
multiplication backend is not applicable.  This version contains no
secondary-platform result.

\end{document}